\documentclass[12pt]{article}

\usepackage[margin=2cm]{geometry} \geometry{letterpaper}
\usepackage[parfill]{parskip}
\usepackage[utf8]{inputenc}
\usepackage[english]{babel}
\usepackage[style=numeric,backref=true,backrefstyle=none,abbreviate=false,urldate=iso,seconds=true]{biblatex}
\usepackage{csquotes}
\usepackage{amssymb,amsmath,amsthm}
\usepackage{url}
\usepackage{multicol}
\usepackage[yyyymmdd,hhmmss]{datetime}
\usepackage[bottom]{footmisc}
\usepackage{hyperref}
\usepackage{color} 
\usepackage{enumitem}
\usepackage[linesnumbered,vlined,boxed,ruled,algonl]{algorithm2e}
\usepackage{orcidlink}
\DeclareGraphicsRule{.tif}{png}{.png}{`convert #1 `dirname #1`/`basename #1 .tif`.png}
\newcommand{\eqn}[1]{\begin{displaymath} #1 \end{displaymath}}
\newcommand{\neqn}[1]{\begin{equation} #1 \end{equation}}

\newcommand{\floor}[1]{{\left\lfloor #1 \right\rfloor}}

\newcommand{\integral}[4]{\displaystyle\int_{#3}^{#4} \! #1 \, d#2}

\newcommand{\disp}[0]{\displaystyle}
\newcommand{\abs}[1]{\left\vert #1 \right\vert}

\newcommand{\eval}[3]{\left. #1 \right|_{#2}^{#3}}

\newcommand{\set}[1]{{\left\{#1\right\}}}

\newcommand{\defeq}[0]{\overset{\mathrm{def}}{=}}
\newcommand{\seqnum}[1]{\href{https://oeis.org/#1}{#1}}
\newcommand{\quadtext}[1]{\quad \text{#1} \quad}
\newcommand{\qquadtext}[1]{\qquad \text{#1} \qquad}

\allowdisplaybreaks[4]

\newcommand{\currentdate}{\the\year--\twodigit{\the\month}--\twodigit{\the\day}}

\title{Computation of the Totient Summatory Function}
\author{Lucas Augustus Brown \orcidlink{0000-0002-6000-3735}}
\date{\currentdate}

\usepackage{fancyhdr}
\usepackage{lastpage}
\pagestyle{fancy}
\fancyhf{}
\lhead{Computation of the Totient Summatory Function} \chead{} \rhead{Lucas Augustus Brown}
\lfoot{\currentdate} \cfoot{} \rfoot{Page \thepage\ of \pageref{LastPage}}

\setlength{\headheight}{15pt}

\begingroup
    \makeatletter
    \@for\theoremstyle:=definition,remark,plain\do{%
        \expandafter\g@addto@macro\csname th@\theoremstyle\endcsname{%
            \addtolength\thm@preskip\parskip
            }%
        }
\endgroup
\makeatletter
\renewenvironment{proof}[1][\proofname]{\par
  \vspace{-\topsep}
  \pushQED{\qed}%
  \normalfont
  \topsep0pt \partopsep0pt 
  \trivlist
  \item[\hskip\labelsep
        \itshape
    #1\@addpunct{.}]\ignorespaces
}{%
  \popQED\endtrivlist\@endpefalse
  \addvspace{0pt} 
}
\makeatother
\makeatletter

\makeatother
\usepackage{thmtools}
\declaretheorem[style=plain]{theorem}

\declaretheorem[sibling=theorem,style=plain]{lemma}

\hypersetup{
    pdftitle={Computation of the Totient Summatory Function},
    pdfauthor={Lucas Augustus Brown},
    pdfsubject={},
    pdfkeywords={},
    colorlinks=true,
    linkcolor=blue,
    urlcolor=blue,
    citecolor=blue,
}

\newcommand{\floordiv}[2]{\floor{\frac{#1}{#2}}}
\newcommand{\dfloordiv}[2]{\floor{\dfrac{#1}{#2}}}
\newcommand{\isqrt}[1]{\floor{\sqrt{#1}}}
\newcommand{\softO}[0]{\widetilde{O}}
\newcommand{\softTheta}[0]{\widetilde{\Theta}}
\SetKw{KwBreak}{break}
\IncMargin{2em}

\begin{filecontents}[overwrite,noheader]{\jobname.bib}
@article{HKM2024,
    title = {Computing $\pi(N)$: An elementary approach in $\tilde{O}(\sqrt{N})$ time},
    author = {Dean Hirsch and Ido Kessler and Uri Mendlovic},
    year = {2022},
    url = {https://www.ams.org/journals/mcom/0000-000-00/S0025-5718-2024-04039-5/},
    doi = {10.1090/mcom/4039},
    journal = {Mathematics of Computation},
    issn = {1088-6842},
    volume = {},
    number = {},
    pages = {},
    archivePrefix = {arXiv},
    eprint = {2212.09857},
    primaryClass = {math.NT},
}

@article{Helfgott2020,
    title = {An improved sieve of Eratosthenes},
    author = {Harald Andr\'{e}s Helfgott},
    year = {2020},
    url = {https://www.ams.org/journals/mcom/2020-89-321/S0025-5718-2019-03438-5/},
    doi = {10.1090/mcom/3438},
    journal = {Mathematics of Computation},
    issn = {1088-6842},
    volume = {89},
    pages = {333--350},
    archivePrefix = {arXiv},
    eprint = {1712.09130v5},
    primaryClass = {math.NT},
}

@online{Bogosel2018,
    title = {Sum of the Euler Totient function},
    author = {Beni Bogo\c{s}el},
    url = {https://mathproblems123.wordpress.com/2018/05/10/sum-of-the-euler-totient-function/},
    year = {2018},
}

@online{griff2023,
    title = {Summing Multiplicative Functions (Pt. 1)},
    author = {Griffin Macris},
    url = {https://gbroxey.github.io/blog/2023/04/30/mult-sum-1.html},
    year = {2023},
}

@online{adamant2023,
    title = {Dirichlet convolution.  Part 1: Fast prefix sum computations},
    author = {Oleksandr Kulkov},
    url = {https://codeforces.com/blog/entry/117635},
    year = {2023},
}

@article{DR1996,
    title = {Computing the summation of the M\"{o}bius function},
    author = {Marc Del\'{e}glise and Jo\"{o}l Rivat},
    year = {1996},
    url = {https://projecteuclid.org/euclid.em/1047565447},
    doi = {10.1080/10586458.1996.10504594},
    journal = {Experimental Mathematics},
    issn = {1944-950X},
    volume = {5},
    number = {4},
    pages = {291--295},
}

@article{HT23,
    title = {Summing $\mu(n)$: a faster elementary algorithm},
    author = {Harald Andr\'{e}s Helfgott and Lola Thompson},
    year = {2023},
    url = {https://link.springer.com/article/10.1007/s40993-022-00408-8},
    doi = {10.1007/s40993-022-00408-8},
    journal = {Research in Number Theory},
    issn = {2363-9555},
    volume = {9},
    number = {6},
    archivePrefix = {arXiv},
    eprint = {2101.08773v4},
    primaryClass = {math.NT},
}

@misc{HKM2025,
    author = {Dean Hirsch and Ido Kessler and Uri Mendlovic},
    date = {2025-05-01},
    howpublished = {Personal communication},
}
\end{filecontents}
\addbibresource{\jobname.bib}

\usepackage{multirow}

\begin{document}
\maketitle 
\begin{abstract}
An algorithm is devised for computing $\Phi(n) = \phi(1) + \phi(2) + \cdots + \phi(n)$ in time $\softTheta(n^{2/3})$ and space $\softTheta(n^{1/3})$.  The starting point is an existing algorithm based on the Dirichlet hyperbola method and the Mertens function.  The algorithm is then used to compute $\Phi(10^{19}) = 30396355092701331435065976498046398788$.
\end{abstract}

\section{Introduction}

The totient-summatory function,
\eqn{\Phi(n) = \sum_{k=1}^n \phi(n),}
has been computed out to $\Phi(10^{18})=303963550927013314319686824781290348$ (\seqnum{A064018}) with the aid of algorithms that take $\softTheta(n^{2/3})$ time \cite{griff2023,adamant2023}; the implied constants and logarithmic factors are small enough that replicating this computation takes less than a day on a recent computer running a single-threaded program.  Unfortunately, these algorithms all require the simultaneous storage of at least $\Theta(n^{1/2})$ integers, which is pushing the limits of what the typical desktop computer can handle.

The contribution of this paper is to modify one such algorithm \cite[\texttt{totientSummatoryFast1}]{griff2023} to use $\softTheta(n^{1/3})$ space, enabling the computation of
\eqn{\Phi(10^{19}) = 30396355092701331435065976498046398788}
in less than 9 days and 7 gigabytes.

\subsection{Conventions}

The Dirichlet convolution of $f$ and $g$ is denoted by $f*g$.

The letter $\mu$ is used for both the M\"{o}bius function and an array such that $\mu_k = \mu(k)$.

The letter $M$ is used for both the Mertens function and an array such that $M_k = M(k)$.

The letter $\delta$ denotes the identity function for Dirichlet convolution: $\delta(1)=1$, and $\delta(x)=0$ for all other $x$.

The reported time and space complexities count the arithmetic operations used and the integers stored, not bit operations and bits stored.

\subsection{Overview of the paper}

\begin{itemize}
\item In Section \ref{ExistingAlgorithms}, we review some existing algorithms.
\item In Section \ref{SpaceReduction}, we modify one such algorithm to reduce its space complexity from $\Theta(n^{1/2})$ to $\softTheta(n^{1/3})$, culminating in Algorithm \ref{Algo13}.
\item In Section \ref{Analysis}, we analyze Algorithm \ref{Algo13}, concluding in Theorem \ref{Algo13time} that, with its optimal parameter selection, it takes $\Theta\left(n^{2/3} \cdot (\ln(\ln(n)))^{1/3} \right)$ time and $\Theta\left(n^{1/3} \cdot (\ln(\ln(n)))^{2/3} \right)$ space.
\item In Section \ref{ComputationalResults}, we present the results of running Algorithm \ref{Algo13} on various inputs.
\item Finally, Section \ref{SupportingLemmas} contains some supporting lemmas.
\end{itemize}

\section{Existing algorithms} \label{ExistingAlgorithms}

A recent paper by Hirsch, Kessler, and Mendlovic \cite[\S 5.6]{HKM2024} outlines an algorithm for computing $\Phi(n)$ in $\softO(n^{1/2})$ time and space; furthermore, this algorithm has the same space-time tradeoffs as that paper's prime-counting algorithm \cite{HKM2025}, which means that it has a variant that achieves $O(n^{5/9+\varepsilon})$ time and $O(n^{2/9+\varepsilon})$ space for any $\varepsilon>0$.  However, this algorithm and its variants have never been implemented, and the hidden factors are expected to be large enough to make using it noncompetitive for practical $n$.

\subsection{The Mertens-first algorithm}

By applying the Dirichlet hyperbola method to the convolution $\phi = \mu * I$, where $I(x)=x$, and letting $ab=n$, we obtain the formula
\eqn{\Phi(n) = \sum_{x=1}^{a}\sum_{y=1}^{n/x} \mu(x) \, I(y) + \sum_{y=1}^{b}\sum_{x=1}^{n/y} \mu(x) \, I(y) - \sum_{x=1}^{a}\sum_{y=1}^{b} \mu(x) \, I(y)}
\eqn{ = \sum_{x=1}^{a}\sum_{y=1}^{n/x} y \cdot \mu(x) + \sum_{y=1}^{b}\sum_{x=1}^{n/y} y \cdot \mu(x) - \sum_{x=1}^{a}\sum_{y=1}^{b} y \cdot \mu(x)}
\neqn{\Phi(n) = \underbrace{\sum_{x=1}^{a} \mu(x) \cdot \frac{\floordiv{n}{x} \cdot \left(\floordiv{n}{x} + 1\right)}{2}}_{X} + \underbrace{\sum_{y=1}^{b} y \cdot M(n/y)}_{Y} - \underbrace{\frac{b \cdot (b+1)}{2} \cdot M(a)}_{Z}. \label{PhiFormula}}
The labels $X$, $Y$, and $Z$ will be used later.

Suppose that we have an algorithm that can compute an individual value of $M(x)$ in time $\softO(x^c)$, and note that $c < 1$ is available \cite{DR1996}.  Using a sieve to compute the necessary M\"obius values, but otherwise evaluating this formula na\"{i}vely, takes time
\eqn{\softO\left( a + \sum_{x=1}^b \left(\frac{n}{x}\right)^c + a^c \right)}
\eqn{=\softO\left( a + n^c \integral{x^{-c}}{x}{1}{b} + a^c \right)}
\eqn{=\softO\left( a + n^c\frac{b^{1-c}}{1-c} - n^c\frac{1^{1-c}}{1-c} + a^c \right)}
\eqn{=\softO\left( a + n^c b^{1-c} - n^c + a^c \right)}
\eqn{=\softO\left( a + n a^{c-1} - n^c + a^c \right).}
The third term is always dominated by the second, and the fourth is always dominated by the first.
\eqn{=\softO\left( a + n a^{c-1} \right)}
To balance the contributions of the two terms, we take $a = \softO(n^{1/(2-c)})$.

The Del\'{e}glise-Rivat algorithm \cite{DR1996} allows $c=2/3$, and so using it in this algorithm results in a time complexity of $\softO(n^{3/4})$.  The Mertens function can also be computed with the Helfgott-Thompson algorithm \cite{HT23}, which takes $\softO(n^{3/5})$ time.  Evaluating (\ref{PhiFormula}) as described then takes $\softO(n^{5/7})$ time.

This algorithm suffers from the fact that all those Mertens values are computed one-at-a-time and are not given a chance to contribute to each other.  This can be ameliorated by another application of the Dirichlet hyperbola method.  This time, we use $\delta = \mu * 1$ and set $\alpha\beta=n$ to obtain
\eqn{\sum_{k=1}^n \delta(k) = \sum_{x=1}^{\alpha}\sum_{y=1}^{n/x} \mu(x) \cdot 1 + \sum_{y=1}^{\beta}\sum_{x=1}^{n/y} \mu(x) \cdot 1 - \sum_{x=1}^{\alpha}\sum_{y=1}^{\beta} \mu(x) \cdot 1}
\eqn{1 = \sum_{x=1}^{\alpha} \mu(x) \floordiv{n}{x} + \sum_{y=1}^{\beta} M(n/y) - M(\alpha) \floor{\beta}}
\neqn{M(n) = 1 + \floor{\beta} M(\alpha) - \sum_{x=1}^{\alpha} \mu(x) \floordiv{n}{x} - \sum_{y=2}^{\beta} M(n/y). \label{MertensRecursion}}
When evaluating (\ref{PhiFormula}), we need to find $\mu(k)$ for $1 \leq k \leq a$, $M(n/k)$ for $1 \leq k \leq b$, and $M(a)$.

When evaluating (\ref{MertensRecursion}), we need to find $\mu(k)$ for $1 \leq k \leq \alpha$, $M(n/k)$ for $2 \leq k \leq \beta$, and $M(\alpha)$.

Clearly, these work well together: we can sieve $\mu$ up to $a$, accumulate the values along the way to compute $M$ up to $a$, use (\ref{MertensRecursion}) to compute the remaining Mertens values, and then feed all that data into (\ref{PhiFormula}) to compute $\Phi(n)$.  This results in Algorithm \ref{Algo1}, which I call the \emph{Mertens-first algorithm}.  Note that we do \emph{not} take $\alpha = a$: instead, we use $a = \softTheta(n^{2/3})$ and $\alpha = \sqrt{n}$.

\begin{algorithm}[H] \label{Algo1}
\DontPrintSemicolon
\caption{Compute $\Phi(n)$ in $\softTheta(n^{2/3})$ time and $\softTheta(n^{1/2})$ space \cite{griff2023}.}
\KwData{$n \geq 1$}
\KwResult{$\Phi(n)$}
$a \gets \floor{\softTheta(n^{2/3})}$; $b \gets \floor{n/a}$; $X \gets 0$; $Y \gets 0$; $Z \gets 0$; $m \gets 0$; $s \gets \isqrt{n}$ \label{1-p1start}

\lIf{$\isqrt{n} = \floor{n/\isqrt{n}}$}{$s \gets s-1$}

$\chi \gets \floor{n/s}$ \label{1-p1start0}

Prepare a segmented sieve to compute $\mu(k)$ for $1 \leq k \leq a$.

Let $\mu$ and $M$ be arrays indexed from $1$ through $\isqrt{n}$, inclusive. \label{1-muinit}

Let $M^\prime$ be an array indexed from $1$ to $\floor{n/\isqrt{n}}$, inclusive, initialized to all zeros. \label{1-endinit}
\begin{multicols}{2}
\For{$x=1$ \KwTo $a$}{ \label{1-p12loopstart}
    $v \gets \floor{n/x}$
    
    $m \gets m + \mu(x)$
    
    $X \gets X + \mu(x) \cdot \dfrac{v \cdot (v+1)}{2}$ \label{1-11}
    
    \uIf{$x \leq \isqrt{n}$}{ \label{1-p1a}
        $M_x \gets m$
        
        $\mu_x \gets \mu(x)$ \label{1-p1b}
    }
    
    \addtocounter{AlgoLine}{15}
    
    \ElseIf{$x = \chi$}{ \label{1-p2a}
    
        \lIf{$v \neq b$}{$M^\prime_{v} \gets m$} \label{1-17}
        
        \addtocounter{AlgoLine}{2}
    
        $s \gets s-1$
        
        $\chi \gets \floor{n/s}$
    }
    
    \addtocounter{AlgoLine}{5}
    
    \lIf{$x = a$}{$Z \gets m \cdot \dfrac{b \cdot (b+1)}{2}$} \label{1-p2b}
}

\label{1-p1end}

\emph{lines \ref{1-p3start}--\ref{1-p3end} here}

\columnbreak

\vspace*{\fill}

\For{$y=b$ \KwTo $1$}{ \label{1-p3start}
    $v \gets \floor{n/y}$
    
    $m \gets 1 - v + \isqrt{v} \cdot M_{\isqrt{v}}$ \label{1-23}
    
    \For{$x=2$ \KwTo $\isqrt{v}$}{
        $m \gets m - \mu_x \cdot \floor{v/x}$ \label{1-p3mobius}
        
        \uIf{$\floor{v/x} \leq \isqrt{n}$}{
            $m \gets m - M_{\floor{v/x}}$
        }
        \Else{
            $m \gets m - M^\prime_{\floor{n/\floor{v/x}}}$
        }
    }
    $M^\prime_y \gets M^\prime_y + m$
    
    $Y \gets Y + y \cdot M^\prime_y$ \label{1-p3end}
}
\end{multicols}

\KwRet $X + Y - Z$
\end{algorithm}

The purpose of the non-consecutivity of the line numbering is to coordinate line numbers between Algorithms \ref{Algo1}, \ref{Algo7}, \ref{Algo10}, \ref{Algo12}, and \ref{Algo13}.  The variables $X$, $Y$, and $Z$ correspond to the labels $X$, $Y$, and $Z$ in (\ref{PhiFormula}).

Algorithm \ref{Algo1} has four phases:
\begin{enumerate} \addtocounter{enumi}{-1}
\item In the zeroth phase, lines \ref{1-p1start}--\ref{1-endinit} initialize the computation.
\item In the first phase, covered in lines \ref{1-p12loopstart}--\ref{1-p1b}, we sieve the M\"{o}bius function up to $\isqrt{n}$, accumulate its values to compute the Mertens function, save both $\mu$ and $M$, and accumulate terms into $X$.
\item In the second phase, covered in lines \ref{1-p12loopstart}--\ref{1-11} and \ref{1-p2a}--\ref{1-p2b}, we continue the sieve up to $a$.  We continue to accumulate M\"{o}bius values to compute Mertens values, and we continue to accumulate terms into $X$, but we do not save any $\mu$, and only some Mertens values are saved.  As the final act of phase 2, we compute $Z$.  At this point, $X$ and $Z$ are fully evaluated, and nothing has been done about $Y$.
\item In the third phase, lines \ref{1-p3start}--\ref{1-p3end} feed the stored M\"{o}bius and Mertens values into (\ref{MertensRecursion}) to compute the remaining Mertens values in order of increasing argument---that is, we first compute $M(n/b)$, then $M(n/(b-1))$, then ..., and finally $M(n)$.  As each Mertens value is computed, a term from $Y$ becomes available, and we evaluate it accordingly.
\end{enumerate}
Once the third phase is done, $\Phi(n)$ is computed as $X+Y-Z$.

Line \ref{1-17} is gatekept by the condition $v \neq b$.  This is needed to mitigate an overlap in the phases that occurs for some $(a,n)$ pairs.  In such cases, without the gatekeeping, line \ref{1-17} would set $M^\prime_b$ to $M(a)$, which should be its final value, but it then gets modifed in the first iteration through phase 3, which throws things off.  With the condition $v \neq b$ in place, $M^\prime_b$ is not touched until phase 3.

Algorithm \ref{Algo1} takes $\softTheta(n^{2/3})$ time: phases 0--2 combined clearly take $\softTheta(a)$ time, and phase 3 takes time
\eqn{\softTheta \left( \sum_{y=1}^b \left( \isqrt{\frac{n}{y}} - 1 \right) \right)}
\eqn{= \softTheta \left( \integral{ \sqrt{\frac{n}{y}} }{y}{1}{b} - b \right)}
\eqn{= \softTheta \left( 2 \sqrt{n} \left( \sqrt{b} - 1 \right) - b \right)}
\eqn{= \softTheta \left( \frac{n}{\sqrt{a}} \right).}

Algorithm \ref{Algo1} takes $\softTheta(\sqrt{n})$ space: we use three arrays of $\Theta(\sqrt{n})$ elements each to store the M\"{o}bius and Mertens values, the M\"{o}bius sieving consumes $\softO(\sqrt{a})$ space, and everything else fits in $O(1)$ space.

\section{The Mertens-first algorithm in less space} \label{SpaceReduction}

We now reduce Algorithm \ref{Algo1}'s memory usage from $\softTheta(\sqrt{n})$ to $\softTheta(\sqrt[3]{n})$.  The first step is to observe that we can move line \ref{1-p3mobius} into phase 1.  The work done in that line is essentially as follows:

\begin{algorithm}[H] \label{Algo1mu}
\DontPrintSemicolon
\caption{An extract from Algorithm \ref{Algo1}}
\For{$y=b$ \KwTo $1$}{
    \For{$x=2$ \KwTo $\isqrt{n/y}$}{
        $M^\prime_y \gets M^\prime_y - \mu_x \cdot \dfloordiv{n}{yx}$
    }
}
\end{algorithm}

If we can swap the order of the loops, then we will be able to integrate this line into phase 1 and not have to store the M\"{o}bius array.

This extract iterates over all pairs $(y,x)$ such that $1 \leq y \leq b$ and $2 \leq x \leq \sqrt{n/y}$.  The range accessed by $x$ is therefore $2 \leq x \leq \sqrt{n}$, and for each $x$, $y$ ranges over $1 \leq y \leq \min(b, n/x^2)$.  This extract is therefore essentially equivalent to

\begin{algorithm}[H] \label{Algo1mu_redone}
\DontPrintSemicolon
\caption{Algorithm \ref{Algo1mu}, reordered}
\For{$x=2$ \KwTo $\isqrt{n}$}{
    \For{$y=1$ \KwTo $\min(b,\floor{n/x^2})$}{
        $M^\prime_y \gets M^\prime_y - \mu(x) \cdot \dfloordiv{n}{yx}$
    }
}
\end{algorithm}

It is also easy to move line \ref{1-23} into phase 1.  The work this line does is essentially

\begin{algorithm}[H] \label{Algo1M}
\DontPrintSemicolon
\caption{An extract from Algorithm \ref{Algo1}}
\For{$y=b$ \KwTo $1$}{
    $M^\prime_y \gets M^\prime_y + 1 - \floor{n/y} + \isqrt{n/y} \cdot M_{\isqrt{n/y}}$
}
\end{algorithm}

This is essentially equivalent to

\begin{algorithm}[H] \label{Algo4extract_redone}
\DontPrintSemicolon
\caption{Algorithm \ref{Algo1M}, redone}
\For{$x=1$ \KwTo $a$}{
    \If{$\exists y \;\ni\; 1 \leq y \leq b \;\;\&\;\; x=\isqrt{n/y}$}{
        \ForAll{such $y$}{
            $M^\prime_y \gets M^\prime_y + 1 - \floor{n/y} + x \cdot M_x$
        }
    }
}
\end{algorithm}

which we make more precise as

\begin{algorithm}[H] \label{Algo4extract_redone_again}
\DontPrintSemicolon
\caption{Algorithm \ref{Algo1M}, redone again}
$d \gets b$

$\gamma \gets \isqrt{n/d}$

\For{$x=1$ \KwTo $a$}{
    
    \While{$x = \gamma$}{
        $M^\prime_d \gets M^\prime_d + 1 - \floor{n/d} + x \cdot M_x$
        
        $d \gets d - 1$
        
        $\gamma \gets \isqrt{n/d}$
    }
}
\end{algorithm}

Applying these edits to Algorithm \ref{Algo1} yields Algorithm \ref{Algo7}:
\begin{itemize}
\item Lines \ref{1-p1start} and \ref{1-p1start0} have had actions added to them.
\item Line \ref{1-muinit} no longer calls for the existence of the array $\mu$.
\item Line \ref{1-p1b} has been deleted.
\item Lines \ref{7-15}--\ref{7-21} have been inserted.
\item Line \ref{1-23} has been modified.
\item Line \ref{1-p3mobius} has been deleted.
\end{itemize}

\begin{algorithm}[H] \label{Algo7}
\DontPrintSemicolon
\caption{Compute $\Phi(n)$ in $\softTheta(n^{2/3})$ time and $\softTheta(n^{1/2})$ space.}
\KwData{$n \geq 1$}
\KwResult{$\Phi(n)$}
$a \gets \floor{\softTheta(n^{2/3})}$; $b \gets \floor{n/a}$; $X \gets 0$; $Y \gets 0$; $Z \gets 0$; $m \gets 0$; $s \gets \isqrt{n}$; $d \gets b$

\lIf{$\isqrt{n} = \floor{n/\isqrt{n}}$}{$s \gets s-1$}

$\chi \gets \floor{n/s}$; $\gamma \gets \isqrt{n/d}$

Prepare a segmented sieve to compute $\mu(x)$ for $1 \leq x \leq a$.

Let $M$ be an array indexed from $1$ through $\isqrt{n}$, inclusive. \label{7-7}

Let $M^\prime$ be an array indexed from $1$ to $\floor{n/\isqrt{n}}$, inclusive, initialized to all zeros.
\begin{multicols}{2}
\For{$x=1$ \KwTo $a$}{
    $v \gets \floor{n/x}$
    
    $m \gets m + \mu(x)$
    
    $X \gets X + \mu(x) \cdot \dfrac{v \cdot (v+1)}{2}$
    
    \uIf{$x \leq \isqrt{n}$}{
        $M_x \gets m$ \label{7-14}
        
        \addtocounter{AlgoLine}{1}
        
        \If{$x > 1$}{ \label{7-15}
            \For{$y=1$ \KwTo $\min(b,\floor{v/x})$}{
                $M^\prime_y \gets M^\prime_y - \mu(x) \cdot \dfloordiv{v}{y}$ \label{7-17}
            }
        }
        \While{$x = \gamma$}{ \label{7-18}
            $M^\prime_d \gets M^\prime_d + 1 - \floor{n/d} + mx$
            
            $d \gets d - 1$
            
            $\gamma \gets \isqrt{n/d}$ \label{7-21}
        }
    }
    
    \addtocounter{AlgoLine}{8}
    
    \ElseIf{$x = \chi$}{
    
        \lIf{$v \neq b$}{$M^\prime_{v} \gets m$}
        
        \addtocounter{AlgoLine}{2}
    
        $s \gets s-1$
        
        $\chi \gets \floor{n/s}$
    }
    
    \addtocounter{AlgoLine}{5}
    
    \lIf{$x = a$}{$Z \gets m \cdot \dfrac{b \cdot (b+1)}{2}$}
}

\emph{lines \ref{7-30}--\ref{7-39} here}

\columnbreak

\vspace*{\fill}

\For{$y=b$ \KwTo $1$}{ \label{7-30}
    $v \gets \floor{n/y}$
    
    $m \gets 0$ \label{7-31}
    
    \For{$x=2$ \KwTo $\isqrt{v}$}{
        \addtocounter{AlgoLine}{1}
        
        \uIf{$\floor{v/x} \leq \isqrt{n}$}{ \label{7-33}
            $m \gets m - M_{\floor{v/x}}$ \label{7-34}
        }
        \Else{ \label{7-35}
            $m \gets m - M^\prime_{\floor{n/\floor{v/x}}}$
        }
    }
    $M^\prime_y \gets M^\prime_y + m$
    
    $Y \gets Y + y \cdot M^\prime_y$ \label{7-39}
}

\end{multicols}

\KwRet $X + Y - Z$
\end{algorithm}

The next step is to move line \ref{7-34} into phase 1.  The work this line does is essentially

\begin{algorithm}[H] \label{Algo7_extract}
\DontPrintSemicolon
\caption{An extract from Algorithm \ref{Algo7}}
\For{$y=b$ \KwTo $1$}{
    $v \gets \floor{n/y}$
    
    $m \gets 0$
    
    \For{$x=2$ \KwTo $\isqrt{v}$}{
        \If{$\floor{v/x} \leq \isqrt{n}$}{
            $m \gets m - M_{\floor{v/x}}$
        }
    }
    $M^\prime_y \gets M^\prime_y + m$
}
\end{algorithm}

This extract iterates over all pairs $(x,y)$ with
\neqn{1 \leq y \leq b \qquadtext{and} 2 \leq x \leq \isqrt{n/y} \qquadtext{and} \floordiv{n}{xy} \leq \isqrt{n} \label{bafnehkj}}
and, for each such pair, subtracts $M_{\floor{n/(xy)}}$ from $M^\prime_y$.  Note that the third inequality is equivalent to this action all happening during phase 1.  Let $k = \floor{n/(xy)}$.  Then for each Mertens value $M(k)$ that we compute, we must find all pairs of integers $(x,y)$ subject to the above bounds and
\eqn{k \leq \frac{n}{xy} < k+1,}
or equivalently,
\eqn{\frac{n}{k+1} < xy \leq \frac{n}{k}.}
Handling a single $k$ at a time is awfully close to factoring $\floor{n/k}$.  To avoid breaking the clock, we will instead gather a block of consecutive Mertens values and handle them all at once.  When this algorithm is fully developed, the memory usage will be $\softO(\sqrt[3]{n})$ due to the array $M^\prime$ and storage inside the M\"{o}bius siever; we will therefore gather Mertens batches of size $b$.  The high index of each batch will be $x$, and the low index will be $A \defeq 1 + b \cdot \floor{x/b}$.  The result is that, when processing each batch, we will be looking for all pairs $(t, \ell)$ such that
\eqn{1 \leq t \leq b \qquadtext{and} 2 \leq \ell \leq \sqrt{n/t} \qquadtext{and} A = b \cdot \floordiv{x}{b} + 1 \qquadtext{and} A \leq \floordiv{n}{\ell t} \leq x.}
Since $A$ and $x$ are integers, the rightmost condition is equivalent to
\eqn{A \leq \frac{n}{\ell t} < x + 1,}
or equivalently,
\eqn{\frac{n}{t \cdot (x+1)} < \ell \leq \frac{n}{At}.}
Furthermore, since $t \leq b$ and the relevant $x$-values are $\leq \isqrt{n}$, the lesser side of this inequality is at least $\softTheta(n^{1/6})$; therefore, the restriction $2 \leq \ell$ above is superfluous.

Algorithm \ref{Algo7_extract} is therefore essentially equivalent to

\begin{algorithm}[H] \label{Algo7_extract_redone}
\DontPrintSemicolon
\caption{Algorithm \ref{Algo7_extract}, redone}
\For{$x=1$ \KwTo $a$}{
    \If{$x \leq \sqrt{n}$}{
        $\mathcal{M}_x \gets M(x)$
        
        \If{$b \mid x$, or $x = \isqrt{n}$,}{
            Let $A$ be the least index in $\mathcal{M}$.
            
            \For{$t=1$ \KwTo $b$}{
                $\ell_{min} \gets 1 + \dfloordiv{n}{t \cdot (x+1)}$
                
                $\ell_{max} \gets \min\left( \isqrt{n/t} , \dfloordiv{n}{t \cdot A} \right)$
                
                \For{$\ell=\ell_{min}$ \KwTo $\ell_{max}$}{
                    $M^\prime_t \gets M^\prime_t - \mathcal{M}_{\floor{n/(\ell t)}}$
                }
            }
            
            Forget the contents of $\mathcal{M}$.
        }
    }
}
\end{algorithm}

Applying this edit to Algorithm \ref{Algo7} yields Algorithm \ref{Algo10}:
\begin{itemize}
\item Line \ref{7-7} has been modified.
\item Lines \ref{10-22}--\ref{10-29} have been inserted.
\item Lines \ref{7-33} and \ref{7-34} have been deleted.
\item Line \ref{7-35} has been modified.
\end{itemize}

\begin{algorithm}[H] \label{Algo10}
\DontPrintSemicolon 
\caption{Compute $\Phi(n)$ in $\softTheta(n^{2/3})$ time and $\softTheta(n^{1/2})$ space.}
\KwData{$n \geq 1$}
\KwResult{$\Phi(n)$}
$a \gets \floor{\softTheta(n^{2/3})}$; $b \gets \floor{n/a}$; $X \gets 0$; $Y \gets 0$; $Z \gets 0$; $m \gets 0$; $s \gets \isqrt{n}$; $d \gets b$

\lIf{$\isqrt{n} = \floor{n/\isqrt{n}}$}{$s \gets s-1$}

$\chi \gets \floor{n/s}$; $\gamma \gets \isqrt{n/d}$

Prepare a segmented sieve to compute $\mu(x)$ for $1 \leq x \leq a$.

Let $\mathcal{M}$ be an array of size $b$.  Its indexing will vary as the algorithm executes. \label{10-5}

Let $M^\prime$ be an array indexed from $1$ to $\floor{n/\isqrt{n}}$, inclusive, initialized to all zeros. \label{10-6}

\begin{multicols}{2}
\For{$x=1$ \KwTo $a$}{
    $v \gets \floor{n/x}$
    
    $m \gets m + \mu(x)$
    
    $X \gets X + \mu(x) \cdot v \cdot (v+1) / 2$
    
    \uIf{$x \leq \isqrt{n}$}{
        $\mathcal{M}_x \gets m$ \label{10-14}
        
        \addtocounter{AlgoLine}{1}
        
        \If{$x > 1$}{
            \For{$y=1$ \KwTo $\min(b,\floor{v/x})$}{
                $M^\prime_y \gets M^\prime_y - \mu(x) \cdot \floor{v/y}$
            }
        }
        \While{$x = \gamma$}{
            $M^\prime_d \gets M^\prime_d + 1 - \floor{n/d} + mx$
            
            $d \gets d - 1$
            
            $\gamma \gets \isqrt{n/d}$
        }
        \If{$b \mid x$, or $x = \isqrt{n}$,}{ \label{10-22}
            Let $A$ be the least index in $\mathcal{M}$.
            
            \For{$t=1$ \KwTo $b$}{
                $\ell_{min} \gets 1 + \floor{n / (t \cdot (x+1))}$
                
                $\ell_{max} \gets \min\left( \isqrt{n/t} , \dfloordiv{n}{t \cdot A} \right)$
                
                \For{$\ell=\ell_{min}$ \KwTo $\ell_{max}$}{
                    $M^\prime_t \gets M^\prime_t - \mathcal{M}_{\floor{n/(\ell t)}}$
                }
            }
            
            Forget the contents of $\mathcal{M}$. \label{10-29}
        }
    }
    \ElseIf{$x = \chi$}{
    
        \lIf{$v \neq b$}{$M^\prime_{v} \gets m$} \label{10-30}
        
        \addtocounter{AlgoLine}{2}
        
        $s \gets s-1$
        
        $\chi \gets \floor{n/s}$
    }
    
    \addtocounter{AlgoLine}{5}
    
    \lIf{$x = a$}{$Z \gets m \cdot b \cdot (b+1) / 2$}
}

\emph{lines \ref{10-38}--\ref{10-45} here}

\columnbreak

\vspace*{\fill}

\For{$y=b$ \KwTo $1$}{ \label{10-38}
    $v \gets \floor{n/y}$
    
    $m \gets 0$
    
    \For{$t=2$ \KwTo $\isqrt{v}$}{
        \addtocounter{AlgoLine}{3}
        
        \If{$\dfloordiv{v}{t} > \isqrt{n}$}{ \label{10-41}
            $m \gets m - M^\prime_{\floor{n/\floor{v/t}}}$ \label{10-39}
        }
    }
    $M^\prime_y \gets M^\prime_y + m$
    
    $Y \gets Y + y \cdot M^\prime_y$ \label{10-45}
}

\end{multicols}

\KwRet $X + Y - Z$
\end{algorithm}

The only thing left to do is remove the need to store $M^\prime_k$ for $k > b$.  This entails moving the action of line \ref{10-39} into phase 2.  The work done by this line is essentially

\begin{algorithm}[H] \label{Algo10_extract}
\DontPrintSemicolon
\caption{An extract from Algorithm \ref{Algo10}}
\For{$y=b$ \KwTo $1$}{
    $v \gets \floor{n/y}$
    
    $m \gets 0$
    
    \For{$t=2$ \KwTo $\isqrt{v}$}{
        \If{$\floor{v/t} > \isqrt{n}$}{
            $m \gets m - M^\prime_{\floor{n/\floor{v/t}}}$
        }
    }
    $M^\prime_y \gets M^\prime_y + m$
}
\end{algorithm}

Let $x$ be the index of a Mertens value that gets saved during phase 2 (so that $\isqrt{n} < x \leq a$).  It gets stored as $M^\prime_\floor{n/x}$, and the $t$- and $y$-values of this loop that touch it are exactly those that satisfy
\eqn{1 \leq y \leq b \quadtext{and} 2 \leq t \leq \isqrt{n/y} \quadtext{and} \isqrt{n} < \floordiv{n}{ty} \quadtext{and} \floordiv{n}{x} = \floordiv{n}{\floor{n/(ty)}}.}
As with the transition from Algorithm \ref{Algo7} to Algorithm \ref{Algo10}, we need to assemble a batch of Mertens values and process them all at once to avoid breaking the clock.  To accommodate this, for each $x$-value such that $M(x)$ gets assembled into the batch, let $w=\floor{n/x}$.  Let $A$ be the greatest $w$-value in the batch and let $B$ be the least.  Then the restrictions on $t$ and $y$ become
\neqn{1 \leq y \leq b \quadtext{and} 2 \leq t \leq \isqrt{n/y} \quadtext{and} \isqrt{n} < \floordiv{n}{ty} \quadtext{and} B \leq \floordiv{n}{\floor{n/(ty)}} \leq A. \label{cbafnlj}}
Applying this edit to Algorithm \ref{Algo10} yields Algorithm \ref{Algo12}:
\begin{itemize}
\item Line \ref{10-6} has been modified.
\item Line \ref{10-30} has been modified.
\item Lines \ref{12-31} and \ref{12-32} have been inserted.
\item Lines \ref{12-35}--\ref{12-39} have been inserted.
\item Line \ref{10-41} has been modified.
\end{itemize}

\begin{algorithm}[H] \label{Algo12}
\DontPrintSemicolon \footnotesize
\caption{Compute $\Phi(n)$ in $\softTheta(n^{2/3})$ time and $\softTheta(n^{1/3})$ space.}
\KwData{$n \geq 1$}
\KwResult{$\Phi(n)$}
$a \gets \floor{\softTheta(n^{2/3})}$; $b \gets \floor{n/a}$; $X \gets 0$; $Y \gets 0$; $Z \gets 0$; $m \gets 0$; $s \gets \isqrt{n}$; $d \gets b$

\lIf{$\isqrt{n} = \floor{n/\isqrt{n}}$}{$s \gets s-1$}

$\chi \gets \floor{n/s}$; $\gamma \gets \isqrt{n/d}$

Prepare a segmented sieve to compute $\mu(x)$ for $1 \leq x \leq a$.

Let $\mathcal{M}$ be an array of size $b$.  Its indexing will vary as the algorithm executes.

Let $M^\prime$ be an array indexed from $1$ to $b$, inclusive, initialized to all zeros.

\begin{multicols}{2}
\For{$x=1$ \KwTo $a$}{
    $v \gets \floor{n/x}$
    
    $m \gets m + \mu(x)$
    
    $X \gets X + \mu(x) \cdot v \cdot (v+1) / 2$
    
    \uIf{$x \leq \isqrt{n}$}{
        $\mathcal{M}_x \gets m$
        
        \addtocounter{AlgoLine}{1}
        
        \If{$x > 1$}{ \label{12-14}
            \For{$y=1$ \KwTo $\min(b,\floor{v/x})$}{
                $M^\prime_y \gets M^\prime_y - \mu(x) \cdot \floor{v/y}$ \label{12-16}
            }
        }
        \While{$x = \gamma$}{
            $M^\prime_d \gets M^\prime_d + 1 - \floor{n/d} + mx$ \label{12-18}
            
            $d \gets d - 1$
            
            $\gamma \gets \isqrt{n/d}$ \label{12-20}
        }
        \If{$b \mid x$, or $x = \isqrt{n}$,}{ \label{12-21}
            Let $A$ be the least index in $\mathcal{M}$.
            
            \For{$t=1$ \KwTo $b$}{
                $\ell_{min} \gets 1 + \floor{n / (t \cdot (x+1))}$
                
                $\ell_{max} \gets \min\left( \isqrt{n/t} , \dfloordiv{n}{t \cdot A} \right)$
                
                \For{$\ell=\ell_{min}$ \KwTo $\ell_{max}$}{
                    $M^\prime_t \gets M^\prime_t - \mathcal{M}_{\floor{n/(\ell t)}}$ \label{12-27}
                }
            }
            
            Forget the contents of $\mathcal{M}$.
        }
    }
    \ElseIf{$x = \chi$}{
    
        \If{$v \neq b$}{
            
            \lIf{$\mathcal{M}$ is empty}{$A \gets v$} \label{12-31}
            
            $\mathcal{M}_v \gets m$; $B \gets v$ \label{12-32}
        }
        
        $s \gets s-1$
        
        $\chi \gets \floor{n/s}$
    }
    \If{$x = a$ or ($x > \isqrt{n}$ and $\mathcal{M}$ is full)}{ \label{12-35}
        \For{$y=1$ to $b$}{
            \ForAll{$t$ satisfying (\ref{cbafnlj})}{
                $M^\prime_y \gets M^\prime_y - \mathcal{M}_{ty}$
            }
        }
        Forget the contents of $\mathcal{M}$. \label{12-39}
    }
    \lIf{$x = a$}{$Z \gets m \cdot b \cdot (b+1) / 2$}
}

\emph{lines \ref{12-42}--\ref{12-49} here}

\columnbreak

\vspace*{\fill}

\For{$y=b$ \KwTo $1$}{ \label{12-42}
    $v \gets \floor{n/y}$
    
    $m \gets 0$
    
    \For{$t=2$ \KwTo $\isqrt{v}$}{ \label{12-45}
        \addtocounter{AlgoLine}{3}
        
        \If{$\dfloordiv{v}{t} > \isqrt{n}$ and $\dfloordiv{n}{\floor{v/t}} \leq b$}{ \label{12-46}
            $m \gets m - M^\prime_{\floor{n/\floor{v/t}}}$ \label{12-47}
        }
    }
    $M^\prime_y \gets M^\prime_y + m$
    
    $Y \gets Y + y \cdot M^\prime_y$ \label{12-49}
}

\end{multicols}

\KwRet $X + Y - Z$
\end{algorithm}

We have now hit our target time- and space-complexities, but some further optimization can be done.  In particular, the iteration over $t$ in phase 3---lines \ref{12-45}--\ref{12-47}---can be made more efficient.  The conditions on $t$ are
\eqn{2 \leq t \leq \isqrt{n/y} \qquadtext{and} \isqrt{n} < \floordiv{\floor{n/y}}{t} \qquadtext{and} \floordiv{n}{\floor{\floor{n/y}/t}} \leq b,}
which is equivalent to
\eqn{2 \leq t \leq \isqrt{n/y} \qquadtext{and} \isqrt{n}+1 \leq \frac{\floor{n/y}}{t} \qquadtext{and} \frac{n}{\floor{\floor{n/y}/t}} < b+1.}
Since $t$ is an integer, we can drop the floor function from the first inequality.
\eqn{2 \leq t \leq \sqrt{n/y} \qquadtext{and} t \leq \frac{\floor{n/y}}{\isqrt{n}+1} \qquadtext{and} \frac{n}{b+1} < \floordiv{\floor{n/y}}{t}.}
By Lemmas \ref{lemma1} and \ref{pmoiuvwq}, we can drop the $t \leq \sqrt{n/y}$ condition and the middle inequality, leaving us with
\eqn{2 \leq t \qquadtext{and} \frac{n}{b+1} < \floordiv{\floor{n/y}}{t}.}

A further improvement can be had by inserting ``and $\mu(x) \neq 0$" into line \ref{12-14}.

Making the corresponding edits modifies lines \ref{12-14}, \ref{12-45}, and \ref{12-46} to yield Algorithm \ref{Algo13}.

\begin{algorithm}[H] \label{Algo13}
\DontPrintSemicolon \footnotesize
\caption{Compute $\Phi(n)$ in $\softTheta(n^{2/3})$ time and $\softTheta(n^{1/3})$ space.}
\KwData{$n \geq 1$}
\KwResult{$\Phi(n)$}
$a \gets \floor{\softTheta(n^{2/3})}$; $b \gets \floor{n/a}$; $X \gets 0$; $Y \gets 0$; $Z \gets 0$; $m \gets 0$; $s \gets \isqrt{n}$; $d \gets b$

\lIf{$\isqrt{n} = \floor{n/\isqrt{n}}$}{$s \gets s-1$}

$\chi \gets \floor{n/s}$; $\gamma \gets \isqrt{n/d}$

Prepare a segmented sieve to compute $\mu(x)$ for $1 \leq x \leq a$.

Let $\mathcal{M}$ be an array of size $b$.  Its indexing will vary as the algorithm executes.

Let $M^\prime$ be an array indexed from $1$ to $b$, inclusive, initialized to all zeros.

\begin{multicols}{2}
\For{$x=1$ \KwTo $a$}{
    $v \gets \floor{n/x}$
    
    $m \gets m + \mu(x)$
    
    $X \gets X + \mu(x) \cdot v \cdot (v+1) / 2$
    
    \uIf{$x \leq \isqrt{n}$}{
        $\mathcal{M}_x \gets m$
        
        \addtocounter{AlgoLine}{1}
        
        \If{$x > 1$ and $\mu(x) \neq 0$}{
            \For{$y=1$ \KwTo $\min(b,\floor{v/x})$}{
                $M^\prime_y \gets M^\prime_y - \mu(x) \cdot \floor{v/y}$ \label{13-16}
            }
        }
        \While{$x = \gamma$}{
            $M^\prime_d \gets M^\prime_d + 1 - \floor{n/d} + mx$ \label{13-18}
            
            $d \gets d - 1$
            
            $\gamma \gets \isqrt{n/d}$ \label{13-20}
        }
        \If{$b \mid x$, or $x = \isqrt{n}$,}{ \label{13-21}
            Let $A$ be the least index in $\mathcal{M}$.
            
            \For{$t=1$ \KwTo $b$}{
                $\ell_{min} \gets 1 + \floor{n / (t \cdot (x+1))}$
                
                $\ell_{max} \gets \min\left( \isqrt{n/t} , \dfloordiv{n}{t \cdot A} \right)$
                
                \For{$\ell=\ell_{min}$ \KwTo $\ell_{max}$}{
                    $M^\prime_t \gets M^\prime_t - \mathcal{M}_{\floor{n/(\ell t)}}$ \label{13-27}
                }
            }
            
            Forget the contents of $\mathcal{M}$.
        }
    }
    \ElseIf{$x = \chi$}{
    
        \If{$v \neq b$}{
            
            \lIf{$\mathcal{M}$ is empty}{$A \gets v$}
            
            $\mathcal{M}_v \gets m$; $B \gets v$
        }
        
        $s \gets s-1$
        
        $\chi \gets \floor{n/s}$
    }
    \If{$x = a$ or ($x > \isqrt{n}$ and $\mathcal{M}$ is full)}{
        \For{$y=1$ to $b$}{
            \ForAll{$t$ satisfying (\ref{cbafnlj})}{
                $M^\prime_y \gets M^\prime_y - \mathcal{M}_{ty}$ \label{13-38}
            }
        }
        Forget the contents of $\mathcal{M}$.
    }
    \lIf{$x = a$}{$Z \gets m \cdot b \cdot (b+1) / 2$}
}

\emph{lines \ref{13-42}--\ref{13-50} here}

\columnbreak

\vspace*{\fill}

\For{$y=b$ \KwTo $1$}{ \label{13-42}
    $v \gets \floor{n/y}$
    
    $m \gets 0$
    
    \For{$t \in \set{2,3,4,...}$}{ \label{13-45}
        \addtocounter{AlgoLine}{3}
        
        \lIf{$\disp \frac{n}{b+1} \geq \floordiv{v}{t}$}{\KwBreak} \label{13-46}
        
        $m \gets m - M^\prime_{\floor{n/\floor{v/t}}}$ \label{13-48}
    }
    $M^\prime_y \gets M^\prime_y + m$
    
    $Y \gets Y + y \cdot M^\prime_y$ \label{13-50}
}

\end{multicols}

\KwRet $X + Y - Z$
\end{algorithm}

\section{Analysis of Algorithm \ref{Algo13}} \label{Analysis}

The inner loops of Algorithm \ref{Algo13} are inside the M\"{o}bius siever and lines \ref{13-16}, \ref{13-18}--\ref{13-20}, \ref{13-27}, \ref{13-38}, and \ref{13-46}--\ref{13-48}.

\begin{lemma} \label{13-M-time}
The M\"{o}bius sieving consumes $\Theta(a \ln(\ln(a)))$ time.
\end{lemma}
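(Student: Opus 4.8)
The plan is to treat the sieve as a standard segmented sieve of Eratosthenes adapted to compute $\mu$. First obtain the primes $p \le \isqrt{a}$ by an ordinary (non-segmented) sieve on $[1,\isqrt{a}]$, which uses $O(\sqrt a)$ space (well within the space budget) and $O(\sqrt a \ln\ln\sqrt a)$ time by the classical analysis. Then sweep $[1,a]$ in $\Theta(\sqrt a)$ consecutive blocks each of length $\Theta(\sqrt a)$; in each block, for every prime $p\le\isqrt{a}$ walk the multiples of $p$ lying in the block (multiplying a stored running product by $p$ and flipping a stored sign) and the multiples of $p^2$ (marking the location $0$); finish each block with an $O(1)$-per-location pass that reads off $\mu$, comparing the running product against the true value to catch a leftover large prime factor. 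Every operation is then charged to one of: initializing/reading the length-$a$ output array, a block boundary, a single multiple-of-$p$ hit, or a single multiple-of-$p^2$ hit.

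\textbf{Upper bound.} The number of multiple-of-$p$ hits is $\sum_{\text{blocks}}\sum_{p\le\isqrt{a}}\big(O(1)+O(\sqrt a)/p\big)$. The $O(1)$ terms total $O(\sqrt a\cdot\pi(\isqrt{a}))=O(a/\ln a)$; the remainder is $O\big(a\sum_{p\le\isqrt{a}}1/p\big)$, and by Mertens' theorem $\sum_{p\le z}1/p=\ln\ln z+O(1)$, so together with $\ln\ln\isqrt{a}=\ln\ln a-\ln 2$ this is $O(a\ln\ln a)$. The multiple-of-$p^2$ hits total $O\big(a\sum_p p^{-2}\big)=O(a)$ since $\sum_p p^{-2}$ converges. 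Initialization, the final per-location passes, and reading the output cost $O(a)$; finding the primes up to $\isqrt{a}$ costs $o(a)$. Summing, the sieve runs in $O(a\ln\ln a)$ time.

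\textbf{Lower bound.} Because we need factorizations rather than mere primality, for each prime $p\le\isqrt{a}$ the sieve visits every multiple of $p$ in $(p,a]$, of which there are at least $\floor{a/p}-1$. Hence it performs at least $\sum_{p\le\isqrt{a}}\big(\floor{a/p}-1\big)\ge a\sum_{p\le\isqrt{a}}1/p-2\pi(\isqrt{a})=a(\ln\ln a+O(1))-O(\sqrt a/\ln a)=\Omega(a\ln\ln a)$ operations, again by Mertens. Together with the upper bound this gives $\Theta(a\ln\ln a)$.

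The only point that needs care — and the main obstacle — is pinning down precisely what ``the segmented sieve'' does, so that the visit-count $\Theta\big(\sum_{p\le\isqrt{a}}a/p\big)$ is genuinely forced and the auxiliary costs (producing the primes up to $\isqrt{a}$, the $\Theta(\sqrt a)$ per-block and per-prime bookkeeping, the $O(\sqrt a)$ working storage) are genuinely negligible relative to $a\ln\ln a$. Once that is fixed, both bounds are immediate consequences of the estimate $\sum_{p\le z}1/p=\ln\ln z+O(1)$.
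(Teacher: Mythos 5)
Your argument is correct, and it is precisely the ``standard result'' that the paper's one-line proof invokes without spelling out: you sieve $[1,a]$ in $\Theta(\sqrt a)$-sized blocks with primes up to $\isqrt{a}$, count the $\Theta\bigl(\sum_{p\le\isqrt{a}} a/p\bigr)$ hits, and apply Mertens' theorem to get $\Theta(a\ln\ln a)$ in both directions. No discrepancy with the paper.
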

\begin{proof}
The M\"{o}bius function is sieved up to $a$; the rest is a standard result.
\end{proof}

\begin{lemma} \label{13-16-time}
Line \ref{13-16} consumes $\Theta(n/\sqrt{a})$ time.
\end{lemma}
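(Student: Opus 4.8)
The plan is to count the total number of times the innermost statement at line \ref{13-16} executes, summed over all $x$ with $1 \le x \le \isqrt{n}$ and $\mu(x) \neq 0$. For a fixed such $x$, the inner loop runs for $y = 1$ to $\min(b, \floor{v/x})$ where $v = \floor{n/x}$, so the number of iterations is $\min(b, \floor{(n/x)/x}) = \min(b, \floordiv{n}{x^2})$ up to a harmless $\pm 1$ from the nested floors (I would note that $\floor{\floor{n/x}/x} = \floor{n/x^2}$ exactly, a standard fact, so there is in fact no slack here). Thus the total cost is $\Theta\!\left(\sum_{x=1}^{\isqrt{n}} \min\!\left(b, \floordiv{n}{x^2}\right)\right)$, and since dropping the coprimality-to-squarefree restriction only changes the sum by a constant factor (squarefree $x$ have positive density), I can work with the full sum.

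Next I would split the sum at the crossover point $x_0$ where $n/x^2 \approx b$, i.e. $x_0 = \sqrt{n/b}$. Recalling $b = \floor{n/a}$, we have $x_0 = \Theta(\sqrt{a})$; note $x_0 \le \isqrt{n}$ holds because $a \le n$. For $x \le x_0$ the term is $b$, contributing $\Theta(b \sqrt{a}) = \Theta((n/a)\sqrt{a}) = \Theta(n/\sqrt{a})$. For $x_0 < x \le \isqrt{n}$ the term is $\floor{n/x^2}$, and $\sum_{x > x_0} n/x^2 = \Theta(n/x_0) = \Theta(n/\sqrt{a})$ by comparison with $\int_{x_0}^{\infty} x^{-2}\,dx$. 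Both pieces are $\Theta(n/\sqrt{a})$, giving the claimed bound. The lower bound is immediate from the first piece alone, since each of the $\Theta(\sqrt{a})$ squarefree values $x \le x_0$ contributes exactly $b = \Theta(n/a)$ iterations.

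There is no serious obstacle here; the only points requiring a little care are (i) confirming that $\floor{\floor{n/x}/x} = \floor{n/x^2}$ so that the loop bound is genuinely $\min(b, \floor{n/x^2})$ rather than something needing an error analysis, (ii) checking $x_0 = \Theta(\sqrt{a})$ lies in the valid range $1 \le x_0 \le \isqrt{n}$, which follows from $b \le a \le n$, and (iii) observing that the ``$\mu(x) \ne 0$'' filter does not hurt the lower bound because squarefree integers up to $x_0$ number $\Theta(x_0)$. With these in hand the two-part split of the sum finishes the argument.
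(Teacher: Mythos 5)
Your proof is correct and matches the paper's approach essentially line for line: count the hits of line~\ref{13-16} as $\sum_{x}\min\bigl(b,\floor{n/x^2}\bigr)$ over squarefree $x\leq\isqrt{n}$, drop the squarefree filter via positive density, split the sum at the crossover $\Theta(\sqrt{a})$, and bound the tail by $\int n/x^2\,dx$. The paper splits at $\isqrt{a}$ rather than at $\sqrt{n/b}$, but since $b=\floor{n/a}$ these agree up to constants, and your explicit verification of $\floor{\floor{n/x}/x}=\floor{n/x^2}$ is a harmless extra care that the paper leaves implicit.
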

\begin{proof}
Line \ref{13-16} is hit
\eqn{\sum_{\substack{2 \leq x \leq \isqrt{n} \\ \mu(x) \neq 0}} \min\left(\floordiv{n}{a}, \floordiv{n}{x^2}\right)}
times.  Since the squarefree integers have a natural density of $6/\pi^2$, this is
\eqn{= \Theta\left( \sum_{x=2}^{\isqrt{n}} \min\left(\floordiv{n}{a}, \floordiv{n}{x^2}\right) \right)}
\eqn{= \Theta\left( \sum_{x=2}^{\isqrt{a}} \floordiv{n}{a} + \sum_{x=\isqrt{a}+1}^{\isqrt{n}} \floordiv{n}{x^2} \right)}
\eqn{= \Theta\left( \floordiv{n}{a} \cdot \left(\isqrt{a} - 1\right) + \integral{\;\frac{n}{x^2}}{x}{\isqrt{a}+1}{\isqrt{n}}\right)}
\eqn{= \Theta\left(\floordiv{n}{a} \cdot \left(\isqrt{a} - 1\right) + \frac{n}{\isqrt{a}+1} - \frac{n}{\isqrt{n}}\right)}
\eqn{= \Theta\left(\frac{n}{\sqrt{a}}\right),}
as desired.
\end{proof}

\begin{lemma} \label{Algo13mintime}
The runtime of Algorithm \ref{Algo13} is at least $\Theta(n^{2/3} \cdot (\ln(\ln(n)))^{1/3})$.
\end{lemma}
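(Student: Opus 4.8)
The plan is to lower-bound the runtime by the sum of the costs of just two of its inner loops. Lemma~\ref{13-M-time} says the M\"{o}bius sieving costs $\Theta(a\ln(\ln(a)))$ and Lemma~\ref{13-16-time} says line~\ref{13-16} costs $\Theta(n/\sqrt{a})$; since these are two distinct bodies of work, the total runtime is at least their sum, and in any case it is at least the maximum of the two, which is within a factor of $2$ of the sum. Hence, for whatever value is assigned to $a$, the runtime is $\Omega\!\big(a\ln(\ln(a)) + n/\sqrt{a}\big)$, and all the remaining loops can only add to this. The lemma therefore reduces to showing that $a\ln(\ln(a)) + n/\sqrt{a} = \Omega\!\big(n^{2/3}(\ln(\ln(n)))^{1/3}\big)$ uniformly in $a$.

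The main step is a weighted AM--GM inequality with weights $\tfrac13$ and $\tfrac23$, chosen precisely so that the power of $a$ cancels. Applying $\tfrac13 u + \tfrac23 v \ge u^{1/3}v^{2/3}$ with $u = 3a\ln(\ln(a))$ and $v = \tfrac32\, n/\sqrt{a}$ yields
\eqn{a\ln(\ln(a)) + \frac{n}{\sqrt{a}} \;\geq\; \big(3a\ln(\ln(a))\big)^{1/3}\left(\tfrac{3}{2}\,\frac{n}{\sqrt{a}}\right)^{2/3} = 3^{1/3}\left(\tfrac{3}{2}\right)^{2/3} n^{2/3}\,\big(\ln(\ln(a))\big)^{1/3},}
a bound that no longer depends on $a$ except through the $\ln(\ln(a))$ factor. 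It then remains to replace $\ln(\ln(a))$ by $\ln(\ln(n))$. I would do this by a short case split: if $a \le n^{1/3}$, then already $n/\sqrt{a} \ge n^{5/6} \gg n^{2/3}(\ln(\ln(n)))^{1/3}$ and we are done without the AM--GM at all; and if $a \ge n^{1/3}$, then $\ln(\ln(a)) \ge \ln(\ln(n)) - \ln 3 \ge \tfrac12\ln(\ln(n))$ for all large $n$, so the displayed bound is $\Omega\!\big(n^{2/3}(\ln(\ln(n)))^{1/3}\big)$, as required. (Alternatively, using that the algorithm takes $a = n^{2/3+o(1)}$, one has $\ln(\ln(a)) = \ln(\ln(n)) + O(1)$ directly.)

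I do not expect a genuine obstacle here. The only points needing care are the observation that the two cited lemmas bound disjoint portions of the computation (so their costs genuinely add, though the weaker ``at least the maximum'' would already suffice up to a constant), and the minor bookkeeping around $\ln(\ln(a))$ for small $a$, handled by the case split above. The choice of weights $\tfrac13,\tfrac23$ in the AM--GM step is the one nontrivial idea, and it is forced by the requirement that the exponents of $a$ in the two terms, namely $1$ and $-\tfrac12$, combine to $0$.
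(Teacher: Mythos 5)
Your proposal is correct and follows essentially the same route as the paper: both arguments lower-bound the runtime by the costs from Lemmas \ref{13-M-time} and \ref{13-16-time}, giving $\Omega(a\ln(\ln(a)) + n/\sqrt{a})$, and then argue this is $\Omega(n^{2/3}(\ln(\ln(n)))^{1/3})$ regardless of $a$. The paper elides the minimization detail by simply asserting the optimal choice of $a$; your weighted AM--GM step with the $a \lessgtr n^{1/3}$ case split is a clean, fully rigorous way of supplying exactly that detail.
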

\begin{proof}
By Lemmas \ref{13-M-time} and \ref{13-16-time}, the combined runtime of of the M\"{o}bius sieving and line \ref{13-16} is
\eqn{\Theta(a \ln(\ln(a))) + \Theta\left(\frac{n}{\sqrt{a}}\right),}
which is minimized by choosing
\eqn{a = \Theta\left(\left(\frac{n}{\ln(\ln(n))}\right)^{2/3}\right);}
the runtime of those parts is then $\Theta\left(n^{2/3} \cdot (\ln(\ln(n)))^{1/3}\right)$ each.
\end{proof}

Lines \ref{13-18}--\ref{13-20} can be neglected: they get hit at most $b$ times.

\begin{lemma} \label{13-27-time}
With $a = \Theta((n/\ln(\ln(n)))^{2/3})$, line \ref{13-27} consumes $\Theta\left(n^{2/3} \cdot (\ln(\ln(n)))^{1/3}\right)$ time.
\end{lemma}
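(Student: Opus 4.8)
The plan is to count how often line \ref{13-27} fires and observe that each execution costs $\Theta(1)$ arithmetic operations, so that this count is the time consumed. Line \ref{13-27} is the batch-reorganized form of the inner statement of Algorithm \ref{Algo7_extract}; by construction it fires once for each pair $(\ell,t)$ satisfying (\ref{bafnehkj}) with $(x,y)$ renamed to $(\ell,t)$, that is,
\eqn{1 \leq t \leq b \qquadtext{and} 2 \leq \ell \leq \isqrt{n/t} \qquadtext{and} \floordiv{n}{\ell t} \leq \isqrt{n}.}
First I would verify that the batching in lines \ref{13-21}--\ref{13-27} recovers exactly this pair set (up to negligibly many terms at the batch boundaries). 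For a fixed $t$, the batch with least index $A$ and high index $x$ contributes the $\ell$-range $\bigl(\tfrac{n}{t(x+1)},\,\tfrac{n}{tA}\bigr]$ capped above at $\isqrt{n/t}$; since the least index of each batch is one more than the high index of the previous batch, these ranges are essentially pairwise disjoint and, as the batches sweep $x$ from $1$ up to $\isqrt{n}$, their union is precisely $\{\ell : \tfrac{n}{t(\isqrt{n}+1)} < \ell \leq \isqrt{n/t}\}$ --- the restriction $2 \leq \ell$ being superfluous, as already noted, because $\ell_{min}$ is at least a constant multiple of $n^{1/6}$. Therefore line \ref{13-27} is executed (essentially) exactly
\eqn{N \;=\; \sum_{t=1}^{b} \left(\isqrt{n/t} - \floordiv{n}{t(\isqrt{n}+1)}\right)}
times, every summand being nonnegative.

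The second step is to evaluate $N$ by the same elementary estimate used in Lemma \ref{13-16-time}. Each summand equals $\sqrt{n/t} - \sqrt{n}/t + O(1)$, which is $O\!\bigl(\sqrt{n/t}\bigr)$ for every $t$ and is bounded below by a positive constant multiple of $\sqrt{n/t}$ once $t \geq 4$ (using $1/t \leq \tfrac{1}{2}t^{-1/2}$ there, and $\sqrt{n/t} \to \infty$ on $t \leq b$ to absorb the $O(1)$). Summing over $1 \leq t \leq b$, and using $\sum_{t \leq b} t^{-1/2} = \Theta(\sqrt{b})$, $\sum_{t \leq b} t^{-1} = O(\ln b)$, and $b = o(\sqrt{nb})$, yields $N = \Theta(\sqrt{n}\,\sqrt{b}) = \Theta(\sqrt{nb})$. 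Since $b = \floordiv{n}{a} = \Theta(n/a)$ for the $a$ in question, $\sqrt{nb} = \Theta(n/\sqrt{a})$, so line \ref{13-27} runs $\Theta(n/\sqrt{a})$ times overall --- the same order as line \ref{13-16}, as one would expect, since the two lines realize the two complementary sums of the Mertens recursion (\ref{MertensRecursion}).

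Finally, plugging in $a = \Theta((n/\ln(\ln(n)))^{2/3})$ gives $\sqrt{a} = \Theta((n/\ln(\ln(n)))^{1/3})$ and hence
\eqn{N \;=\; \Theta\!\left(\frac{n}{(n/\ln(\ln(n)))^{1/3}}\right) \;=\; \Theta\!\left(n^{2/3}\,(\ln(\ln(n)))^{1/3}\right),}
which, combined with the $\Theta(1)$ cost per execution, is the claimed time bound. I expect the only point requiring genuine care to be the first step --- the bookkeeping showing that the batch decomposition of phase 1 reproduces the pair set of (\ref{bafnehkj}) without material double-counting or omissions (any slippage at batch boundaries totals $O(\sqrt{n})$ executions, which is dominated by $N$); once that is in hand, the estimate of $N$ is routine and mirrors Lemma \ref{13-16-time} almost line for line.
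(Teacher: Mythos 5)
Your proof is correct, and it takes a genuinely different --- and substantially simpler --- route than the paper's. The paper splits the batches into three groups: the first batch (with $A=1$, $x=b$), shown directly to cost $\Theta\left(n^{2/3}(\ln\ln n)^{1/3}\right)$; a possible partial final batch at $x=\isqrt{n}$, bounded by $O(b + \sqrt{n}\ln n)$; and the intermediate full batches, whose total cost $\Theta\left(a\ln(n^2 a^{-3})\right)$ is obtained from the lengthy nested-integral estimate of Lemma \ref{BigIntegral}, itself supported by Lemmas \ref{sqrtapprox2}, \ref{mfeqljk_u}, \ref{keysmash_b}, and \ref{Snu5int}, with Lemmas \ref{ouawrt4coi}, \ref{okiagfvwr}, and \ref{lmmfanb} handling the first and last batches. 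You instead exploit the fact that the batch least-index is one more than the preceding batch's high index, so that for each fixed $t$ the per-batch contributions $\max\!\left(\min\!\left(\isqrt{n/t},\floordiv{n}{tA}\right)-\floordiv{n}{t(x+1)},0\right)$ telescope exactly to $\isqrt{n/t}-\floordiv{n}{t(\isqrt{n}+1)}$, yielding the closed form $N=\sum_{t=1}^b\left(\isqrt{n/t}-\floordiv{n}{t(\isqrt{n}+1)}\right)=\Theta(\sqrt{nb})=\Theta(n/\sqrt{a})$ by the same elementary estimate as Lemma \ref{13-16-time}. This sidesteps the entire decomposition and the eight supporting lemmas, and it also explains \emph{why} line \ref{13-27} matches line \ref{13-16} in cost: they realize complementary sums of (\ref{MertensRecursion}). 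The only thing I would tighten is the hedging around ``essentially'' --- the telescoping is in fact exact once you note that $g(z)=\floordiv{n}{tz}$ is weakly decreasing, that $g(1)=\floordiv{n}{t}\geq\isqrt{n/t}$, and that $g(\isqrt{n}+1)\leq\isqrt{n/t}$, so the crossover with $\isqrt{n/t}$ in the $\min$ contributes its term exactly once and the rest telescope cleanly; and the per-batch $\Theta(b)$ loop overhead from the $t$-loop totals only $O(\sqrt{n})$ across all $\Theta(\sqrt{n}/b)$ batches, which is dominated by $N$.
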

\begin{proof}
Line \ref{13-27} gets hit at least
\eqn{\sum_{\substack{1 \leq x \leq \sqrt{n} \\ b \mid x}}\sum_{t=1}^b \left( 1 + \max\left( \ell_{max}(x,t) - \ell_{min}(x,t), 0 \right) \right)}
times.  If $b \mid \isqrt{n}$, then this is exactly the number of hits; otherwise, there will be a final phase-1 batch that is not included in the above sum.

The first batch ($x=b$) and that possible last batch $(x=\isqrt{n} \not\equiv 0 \pmod{b})$ need special handling.  For the first batch, when $x=b$, the relevant section of the algorithm amounts to

\begin{algorithm}[H]
\DontPrintSemicolon
\caption{An extract from Algorithm \ref{Algo13}, with $x=b$}
\Begin{
    \For{$t=1$ \KwTo $b$}{
        $\ell_{min} \gets 1 + \dfloordiv{n}{t \cdot (b+1)}$
        
        $\ell_{max} \gets \min\left( \isqrt{n/t} , \dfloordiv{n}{t} \right)$
        
        \For{$\ell=\ell_{min}$ \KwTo $\ell_{max}$}{
            $M^\prime_t \gets M^\prime_t - \mathcal{M}_{\floor{n/(\ell t)}}$
        }
    }
}
\end{algorithm}

The time consumed by this extract is
\eqn{\sum_{t=1}^b \left( 1 + \max\left( \ell_{max} - \ell_{min} , 0 \right) \right)}
\eqn{= \sum_{t=1}^b \left( 1 + \max\left( \min\left( \isqrt{n/t} , \floordiv{n}{t} \right) - 1 - \floordiv{n}{t \cdot (b+1)} , 0 \right) \right)}
\eqn{= b + \sum_{t=1}^b \max\left( \min\left( \isqrt{n/t} , \floordiv{n}{t} \right) - 1 - \floordiv{n}{t \cdot (b+1)} , 0 \right).}
We have $0 < t < n$, so $\sqrt{n/t} \leq n/t$.
\eqn{= b + \sum_{t=1}^b \max\left( \isqrt{n/t} - 1 - \floordiv{n}{t \cdot (b+1)} , 0 \right)}
For $b \leq n^{1/3}$, Lemma \ref{ouawrt4coi} establishes that the first argument of the $\max$ function is negative, so the time consumed by the extract is $O(b)$.  The case $b > n^{1/3}$ is a bit more complicated: in this case, the $\max$ function's first argument prevails for
\eqn{t > t_0 \defeq n \cdot \left(\frac{1}{2} - \frac{1}{b+1} - \sqrt{\frac{1}{4} - \frac{1}{b+1}}\right),}
so the time consumed by the extract is
\eqn{b + \sum_{t=t_0}^b \left( \isqrt{n/t} - 1 - \floordiv{n}{t \cdot (b+1)} \right).}
The sum will turn out to be $\softTheta(a)$, so the $b$ is neglectably small.
\eqn{= \Theta\left(\int_{t_0}^b \left( \sqrt{\frac{n}{t}} - 1 - \frac{n}{t \cdot (b+1)} \right) dt\right)}
\eqn{= \Theta\left(2\sqrt{bn} - 2\sqrt{nt_0} - b + t_0 + \frac{n}{b+1} \cdot \ln\left(\frac{t_0}{b}\right)\right)}
From Lemma \ref{sqrtapprox2}, we have that $t_0 = n \cdot (b^{-2} + O(b^{-3}))$.  Therefore,
\eqn{= \Theta\left(2\sqrt{bn} - 2\sqrt{n^2 \cdot (b^{-2} + O(b^{-3}))} - b + n \cdot (b^{-2} + O(b^{-3})) + \frac{n}{b+1} \cdot \ln\left(\frac{n \cdot (b^{-2} + O(b^{-3}))}{b}\right)\right)}
\eqn{= \Theta\left(2\sqrt{bn} - 2a\sqrt{1 + O(b^{-1})} - b + \frac{n}{b^2} \cdot (1+O(b^{-1})) + \frac{a}{1 + 1/b} \cdot \ln\left(\frac{n}{b^3}\cdot(1+O(b^{-1}))\right)\right).}
Recall that we are working with $a = \Theta\left((n/\ln(\ln(n)))^{2/3}\right)$, so $b = \Theta\left(n^{1/3} \cdot (\ln(\ln(n)))^{2/3}\right)$.  Therefore the first term dominates the rest, yielding
\eqn{= \Theta\left(n^{2/3} \cdot (\ln(\ln(n)))^{1/3}\right).}
In the last batch, when $x = \isqrt{n} \not\equiv 0 \pmod{b}$, the relevant section of the algorithm amounts to

\begin{algorithm}[H]
\DontPrintSemicolon
\caption{An extract from Algorithm \ref{Algo13}, with $x = \isqrt{n} \not\equiv 0 \pmod{b}$}
\Begin{
    $x \gets \isqrt{n}$
    
    $A \gets 1 + b \cdot \floor{x/b}$
    
    \For{$t=1$ \KwTo $b$}{
        $\ell_{min} \gets 1 + \dfloordiv{n}{t \cdot (x+1)}$
        
        $\ell_{max} \gets \min\left( \isqrt{n/t} , \dfloordiv{n}{t \cdot A} \right)$
        
        \For{$\ell=\ell_{min}$ \KwTo $\ell_{max}$}{
            $M^\prime_t \gets M^\prime_t - \mathcal{M}_{\floor{n/(\ell t)}}$
        }
    }
}
\end{algorithm}

The time consumed by this extract is
\eqn{\sum_{t=1}^b \left( 1 + \max\left( \ell_{max} - \ell_{min} , 0 \right) \right)}
\eqn{= b + \sum_{t=1}^b \max\left( \min\left( \isqrt{n/t} , \dfloordiv{n}{t \cdot A} \right) - 1 - \dfloordiv{n}{t \cdot (x+1)} , 0 \right)}
\neqn{= b + \sum_{t=1}^b \max\left( \min\left( \isqrt{n/t} , \dfloordiv{n}{t \cdot \left(1 + b \cdot \dfloordiv{\isqrt{n}}{b}\right)} \right) - 1 - \dfloordiv{n}{t \cdot (\isqrt{n}+1)} , 0 \right). \label{hydrogen}}
We split off the $t=1$ term for special treatment: this is
\eqn{\max\left( \min\left( \isqrt{n} , \dfloordiv{n}{1 + b \cdot \dfloordiv{\isqrt{n}}{b}} \right) - 1 - \dfloordiv{n}{\isqrt{n}+1} , 0 \right).}
By hypothesis, we are working with $\isqrt{n} \not\equiv 0 \pmod{b}$, so $1 + b \cdot \floor{\isqrt{n}/b} \leq \isqrt{n}$.  Therefore, in the $\min$ function, the first argument prevails.
\eqn{= \max\left( \isqrt{n} - 1 - \dfloordiv{n}{\isqrt{n}+1} , 0 \right)}
\eqn{= 0}
Therefore (\ref{hydrogen}) becomes
\eqn{= b + \sum_{t=2}^b \max\left( \min\left( \isqrt{n/t} , \dfloordiv{n}{t \cdot \left(1 + b \cdot \dfloordiv{\isqrt{n}}{b}\right)} \right) - 1 - \dfloordiv{n}{t \cdot (\isqrt{n}+1)} , 0 \right).}
By Lemma \ref{okiagfvwr}, in the $\min$ function, the second argument prevails.
\eqn{= b + \sum_{t=2}^b \max\left( \floordiv{n}{t \cdot \left(1 + b \cdot \dfloordiv{\isqrt{n}}{b}\right)} - 1 - \floordiv{n}{t \cdot (\isqrt{n}+1)} , 0 \right)}
\eqn{= \Theta\left( b + \int_2^b \max\left( \frac{n}{t \cdot \left(1 + b \cdot \dfloordiv{\isqrt{n}}{b}\right)} - 1 - \frac{n}{t \cdot (\isqrt{n}+1)} , 0 \right) dt \right)}
Let
\eqn{U \defeq n \cdot \frac{\isqrt{n} - b \cdot \dfloordiv{\isqrt{n}}{b}}{\left(1 + b \cdot \dfloordiv{\isqrt{n}}{b}\right)\left(\isqrt{n}+1\right)}.}
Then
\eqn{= \Theta\left( b + \int_2^b \max\left( \frac{U}{t} - 1 , 0 \right) dt \right)}
\eqn{< \Theta\left( b + \int_2^b \max\left( \frac{U}{t} , 0 \right) dt \right)}
\eqn{< \Theta\left( b + U \ln(U) \right).}
By Lemma \ref{lmmfanb}, this can be weakened to
\eqn{= O(b + \sqrt{n} \ln(n)).}

Recall that we are in the process of estimating the time devoted to line \ref{13-27}.  We have separated out the first and last batches for special treatment, and found them to consume
\eqn{\Theta\left(n^{2/3} \cdot (\ln(\ln(n)))^{1/3}\right) + O(b + \sqrt{n} \ln(n))}
\neqn{= \Theta\left(n^{2/3} \cdot (\ln(\ln(n)))^{1/3}\right) \label{13-27-time-1}}
time.  We now turn our attention to those batches in which $b \mid x$ and $x \neq b$.  For those batches, we have $A = 1 + x - b$, so the time devoted to those batches is
\eqn{\sum_{\substack{2b \leq x \leq \sqrt{n} \\ b \mid x}} \; \sum_{t=1}^b \left( 1 + \max\left( \min \left( \isqrt{n/t} , \floordiv{n}{t \cdot (1 + x - b)} \right) - \left( 1 + \floordiv{n}{t \cdot (x+1)} \right) , 0 \right) \right).}
By Lemma \ref{BigIntegral}, this is
\eqn{= \Theta\left( a \cdot \ln(n^2 a^{-3}) \right).}
With $a = \Theta\left((n/\ln(\ln(n)))^{2/3}\right)$, this works out to
\eqn{= \Theta\left( a \cdot \ln(\ln(\ln(n))) \right)}
\neqn{= \Theta\left( n^{2/3} \cdot (\ln(\ln(n)))^{-2/3} \cdot \ln(\ln(\ln(n))) \right). \label{13-27-time-2}}
From (\ref{13-27-time-1}) and (\ref{13-27-time-2}), we see that, with $a = \Theta((n/\ln(\ln(n)))^{2/3})$, the total time consumed in handling line \ref{13-27} is
\eqn{\Theta\left(n^{2/3} \cdot (\ln(\ln(n)))^{1/3}\right) + \Theta\left( n^{2/3} \cdot (\ln(\ln(n)))^{-2/3} \cdot \ln(\ln(\ln(n))) \right)}
\eqn{= \Theta \left( n^{2/3} \cdot (\ln(\ln(n)))^{1/3} \right),}
as desired.
\end{proof}

\begin{lemma} \label{13-38-time}
Line \ref{13-38} consumes $\Theta\left(\sqrt{n} \ln(b)\right)$ time.
\end{lemma}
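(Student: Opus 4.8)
The plan is to count the firings of line \ref{13-38}, aggregated over all phase-2 batches, to add the fixed cost of running its outer loop over $y$ once per batch, and to check that the count dominates. The first task is to determine which pairs $(y,t)$ fire line \ref{13-38}. Over the course of phase~2 the array $\mathcal{M}$ receives one entry for each $x$ with $\isqrt n<x\leq a$ and $\floordiv nx\neq b$, stored under the index $w=\floordiv nx$; since $\isqrt n<a$ and (because $b+1>n/a$) also $n/(b+1)<a$, these indices run through every integer of the interval with endpoints $b+1$ and $\floordiv{n}{\isqrt n+1}=\isqrt n+O(1)$, no integer being skipped because for $x>\isqrt n$ consecutive $x$ change $\floordiv nx$ by $0$ or $1$. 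This interval is cut into $\Theta(\isqrt n/b)$ consecutive windows of at most $b$ integers, so the batch ranges $[B,A]$ in (\ref{cbafnlj}) tile it; hence across all batches line \ref{13-38} fires exactly for the pairs with $1\leq y\leq b$, $t\geq2$, $\isqrt n<\floordiv{n}{ty}$, and $\floordiv{n}{\floor{n/(ty)}}>b$. The first of these already forces $t\leq\isqrt{n/y}$ (it gives $ty\leq n/(\isqrt n+1)<\isqrt n+1$, hence $ty\leq\isqrt n$ and $t\leq\isqrt n/y\leq\sqrt{n/y}$; compare Lemma \ref{lemma1}), and $\floordiv{n}{\floor{n/(ty)}}>b$ is equivalent to $\floordiv{n}{ty}\leq\floordiv{n}{b+1}$. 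As a consistency check, these pairs, together with the pairs $\floordiv{n}{ty}>n/(b+1)$ handled in phase~3 by lines \ref{13-46}--\ref{13-48}, reconstitute exactly the pairs $\floordiv{n}{ty}>\isqrt n$ touched by line \ref{10-39} of Algorithm \ref{Algo10}. Besides the $O(1)$ work per firing, each of the $\Theta(\isqrt n/b)$ batches pays $\Theta(b)$ merely to iterate $y$, contributing $\Theta(\sqrt n)$ overall.

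It then remains to count the firings,
\eqn{N \;\defeq\; \sum_{y=1}^{b}\,\#\Big\{\,t\ \text{an integer}\ :\ \isqrt n<\floordiv{n}{ty}\leq\floordiv{n}{b+1}\,\Big\}.}
For fixed $y\leq b$, the left inequality is $t\leq n/\big(y(\isqrt n+1)\big)$ with $n/(\isqrt n+1)=\sqrt n+O(1)$, while the right inequality is $t>n/\big(y(\floor{n/(b+1)}+1)\big)$ with $n/(\floor{n/(b+1)}+1)\in(1,b+1)$, so the constraint $t\geq2$ is absorbed. The number of admissible $t$ is thus $\big(\sqrt n-(b+1)\big)/y+O(1)=\Theta(\sqrt n/y)$, the error being harmless because $\sqrt n/y\geq\sqrt n/b=\Theta(n^{1/6})$. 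Summing the harmonic series gives $N=\Theta\big(\sqrt n\sum_{y\leq b}1/y\big)=\Theta(\sqrt n\ln b)$ for $b\geq2$, i.e.\ for $n$ large enough that the statement is meaningful. Adding the per-batch overhead, line \ref{13-38} consumes $\Theta(N)+\Theta(\sqrt n)=\Theta(\sqrt n\ln b)$, since $\ln b\geq1$.

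The main obstacle will be the bookkeeping in the first step: verifying that the batch endpoints $A,B$ partition the interval of stored indices with neither gaps nor overlaps, so that each qualifying $(y,t)$ is counted once and only once. This entails attending to the first batch ($x$ near $\isqrt n$), the possibly short final batch, the deliberate exclusion of the index $b$ (whose Mertens value is carried instead by the phase-3 accumulator $M^\prime_b$), and the boundary behavior of $\floordiv{n}{\isqrt n+1}$ near $\isqrt n$; the rest is the routine harmonic estimate above.
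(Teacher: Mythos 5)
Your proposal is correct, and it takes a genuinely different route from the paper's proof. The paper works batch by batch: it approximates the $x$-values stored in phase~2 by $n/(\sqrt n - k)$, approximates the $j$\textsuperscript{th} batch's endpoints by $A \approx \sqrt n - b(j-1)$ and $B \approx \sqrt n - bj$, shows each batch contributes $\Theta(b\ln b)$ hits to line \ref{13-38}, and multiplies by the $\Theta(\sqrt n/b)$ batches. Your proof instead aggregates across batches first: you observe that the batch windows $[B,A]$ tile the set $\{b+1,\dots,\floor{n/(\isqrt n+1)}\}$ of stored indices, so the total number of firings is exactly the number of pairs $(y,t)$ with $1\le y\le b$, $t\ge 2$, and $\isqrt n < \floordiv{n}{ty} \le \floor{n/(b+1)}$, and then a direct harmonic estimate gives $\Theta(\sqrt n\ln b)$. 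Your route trades the paper's per-batch ``$\approx$'' heuristics for a single exact identity plus a clean count, which is arguably tighter and easier to make fully rigorous; it also makes explicit the complementarity between line \ref{13-38} and the phase-3 lines \ref{13-46}--\ref{13-48}, which the paper leaves implicit. Your additional accounting of the $\Theta(b)$ per-batch loop overhead (contributing $\Theta(\sqrt n)$ total) is a detail the paper omits but that is needed for the $\Theta$-bound, and it is correct. The bookkeeping caveats you flag at the end (first/last batch, exclusion of index $b$, boundary of $\floor{n/(\isqrt n+1)}$) are genuine but affect only $O(1)$ batches, i.e.\ $O(b\ln b)$ work, which is absorbed.
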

\begin{proof}
On each iteration through phase 2 in which line \ref{13-38} gets hit, it gets hit
\eqn{\sum_{y=1}^b f(t,y)}
times, where $f(t,y)$ is the number of integers $t$ that satisfy (\ref{cbafnlj}).

We need to determine which values of $A$ and $B$ happen.  The $k$\textsuperscript{th} value of $x$ such that $M(x)$ gets saved during phase 2 is approximately
\eqn{\frac{n}{\sqrt{n} - k}.}
We accumulate batches of size $b$, so a batch will be processed when $k \mid b$; the first batch has $k=b$ and the last batch will have $k \approx \sqrt{n} - \sqrt[3]{n}$.  The $j$\textsuperscript{th} batch will then have
\eqn{A \approx \sqrt{n} - b \cdot (j-1) \qquadtext{and} B \approx \sqrt{n} - bj.}
When processing the $j$\textsuperscript{th} batch, the bounds on $t$ are therefore approximately
\eqn{2 \leq t \leq \sqrt{n/y} \quadtext{and} \sqrt{n} < \frac{n}{ty} \quadtext{and} \sqrt{n}-bj \leq ty \leq \sqrt{n}-b\cdot(j-1)}
\eqn{2 \leq t \leq \sqrt{\frac{n}{y}} \quadtext{and} t < \frac{\sqrt{n}}{y} \quadtext{and} \frac{\sqrt{n}-bj}{y} \leq t \leq \frac{\sqrt{n}-bj}{y}+\frac{b}{y}.}
The second condition and the upper side of the first condition are both weaker than the upper side of the third condition.
\eqn{2 \leq t \quadtext{and} \frac{\sqrt{n}-bj}{y} \leq t \leq \frac{\sqrt{n}-bj}{y}+\frac{b}{y}}
Therefore
\eqn{f(t,y) \approx \frac{\sqrt{n}-bj}{y}+\frac{b}{y} - \max\left(2, \frac{\sqrt{n}-bj}{y}\right),}
so on an iteration through phase 2 in which line \ref{13-38} gets hit, it gets hit
\eqn{\Theta\left(\sum_{y=1}^b \left( \frac{\sqrt{n}-bj}{y}+\frac{b}{y} - \max\left(2, \frac{\sqrt{n}-bj}{y}\right)\right)\right) = \Theta\left( \sum_{y=1}^b \frac{b}{y} \right) = \Theta\left( b \ln(b) \right)}
times.
There are $\Theta(\sqrt{n}-\sqrt[3]{n})$ values of $x$ such that $M(x)$ gets saved during phase 2; since each batch has size $b$, there are
\eqn{\Theta\left( \frac{\sqrt{n}-\sqrt[3]{n}}{b} \right)}
batches.  The total time devoted to line \ref{13-38} across all iterations through phase 2 is therefore
\eqn{\Theta\left( b \ln(b) \right) \cdot \Theta\left( \frac{\sqrt{n}-\sqrt[3]{n}}{b} \right)}
\eqn{= \Theta\left( \sqrt{n} \ln(b) \right),}
as desired.
\end{proof}

\begin{lemma} \label{13-46-48-time}
Lines \ref{13-46}--\ref{13-48} consume $\Theta(b \ln(b))$ time.
\end{lemma}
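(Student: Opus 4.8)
The plan is to count how often lines \ref{13-46} and \ref{13-48} are executed; since each execution is $O(1)$ work, showing the total count is $\Theta(b\ln b)$ finishes the proof. Fix $y\in\{1,\dots,b\}$ in phase~3 and analyze the inner loop. Writing $v=\floor{n/y}$ and using the standard nesting identity $\floordiv{\floor{n/y}}{t}=\floordiv{n}{yt}$, line \ref{13-48} is executed exactly for those $t\ge 2$ with $\floordiv{n}{yt}>\frac{n}{b+1}$. Since $\floordiv{n}{yt}$ is non-increasing in $t$ and equals $0$ once $t>v$ (while $\frac{n}{b+1}>0$), these $t$ form an initial segment of $\{2,3,4,\dots\}$, so the loop always breaks; letting $g(y)$ be the length of that segment, for this $y$ line \ref{13-48} is hit $g(y)$ times and line \ref{13-46} is hit $g(y)+1$ times (the extra hit being the check that triggers the break). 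Summing over $y$, line \ref{13-48} is hit $\sum_{y=1}^{b}g(y)$ times and line \ref{13-46} is hit $\sum_{y=1}^{b}g(y)+b$ times.

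Next I would sandwich $\sum_{y=1}^{b}g(y)$. For the upper bound, $\floordiv{n}{yt}\le n/(yt)$, so $\floordiv{n}{yt}>\frac{n}{b+1}$ forces $t<(b+1)/y$; hence $g(y)\le(b+1)/y$ and $\sum_{y=1}^{b}g(y)\le(b+1)\sum_{y=1}^{b}\frac{1}{y}=O(b\ln b)$. For the lower bound, if $yt\le b/2$ then $\floordiv{n}{yt}\ge\frac{n}{yt}-1\ge\frac{2n}{b}-1$, and the inequality $\frac{2n}{b}-1>\frac{n}{b+1}$ rearranges to $n(b+2)>b(b+1)$, which follows from $n\ge b+1$ (and $n\ge b+1$ holds since $b\le n/2$ for all sufficiently large $n$). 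Thus every $t$ with $2\le t\le b/(2y)$ contributes to $g(y)$, so $g(y)\ge\floordiv{b}{2y}-1$ whenever $y\le b/4$; therefore $\sum_{y=1}^{b}g(y)\ge\sum_{y=1}^{\floor{b/4}}\left(\floordiv{b}{2y}-1\right)\ge\frac{b}{2}\sum_{y=1}^{\floor{b/4}}\frac{1}{y}-\frac{b}{2}=\Omega(b\ln b)$, using $b\to\infty$.

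Combining the bounds, lines \ref{13-46} and \ref{13-48} are each executed $\Theta(b\ln b)+O(b)=\Theta(b\ln b)$ times, so lines \ref{13-46}--\ref{13-48} consume $\Theta(b\ln b)$ time, as claimed. The one mildly delicate step is the lower bound: one must confirm that a genuinely $\Theta(b/y)$-long range of $t$ actually survives the break test rather than being collapsed by the nested floor functions, and the elementary inequality $\frac{2n}{b}-1>\frac{n}{b+1}$ (equivalently $n(b+2)>b(b+1)$) is exactly what secures this.
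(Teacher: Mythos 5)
Your proposal is correct and takes essentially the same approach as the paper: identify that for each $y$ the inner loop runs roughly $(b+1)/y$ times and sum over $y$ to get $\Theta(b\ln b)$. The paper's proof is a terse one-liner asserting the count is $\Theta\bigl(\sum_{y=1}^b \frac{n/y}{n/(b+1)}\bigr)$; you supply the missing rigor by sandwiching the iteration count $g(y)$ between $\floor{b/(2y)}-1$ and $(b+1)/y$ using the nested-floor identity, which is a cleaner and more self-contained justification than what the paper gives.
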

\begin{proof}
These lines get hit
\eqn{\Theta\left( \sum_{y=1}^b \frac{n/y}{n/(b+1)} \right) = \Theta\left( \sum_{y=1}^b \frac{b+1}{y} \right) = \Theta\left( b \ln(b) \right)}
times.
\end{proof}

\begin{theorem} \label{Algo13time}
With an optimal parameter selection of
\eqn{a = \Theta\left(\left( \frac{n}{\ln(\ln(n))} \right)^{2/3} \right),}
Algorithm \ref{Algo13} computes $\Phi(n)$ in $\Theta\left(n^{1/3} \cdot (\ln(\ln(n)))^{2/3}\right)$ space and $\Theta\left( n^{2/3} \cdot (\ln(\ln(n)))^{1/3} \right)$ time.
\end{theorem}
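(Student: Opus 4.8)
The plan is to assemble the running time of Algorithm \ref{Algo13} from its inner loops plus its $O(a)$ of outer-loop overhead and then substitute the stated value of $a$; the space bound is a similarly short accounting.

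For the time bound, first note that the work done outside the inner loops consists of one pass of the phase-1/2 loop over $x = 1, \dots, a$ and one pass of the phase-3 loop over $y = b, \dots, 1$, each iteration of which costs $O(1)$; since $b \leq a$, this contributes $O(a)$. The inner loops are exactly those listed just before Lemma \ref{13-M-time}: the M\"{o}bius siever and lines \ref{13-16}, \ref{13-18}--\ref{13-20}, \ref{13-27}, \ref{13-38}, and \ref{13-46}--\ref{13-48}. By Lemma \ref{13-M-time} the siever costs $\Theta(a \ln(\ln(a)))$; by Lemma \ref{13-16-time} line \ref{13-16} costs $\Theta(n/\sqrt{a})$; lines \ref{13-18}--\ref{13-20} are hit at most $b$ times, costing $O(b) = O(a)$; by Lemma \ref{13-38-time} line \ref{13-38} costs $\Theta(\sqrt{n} \ln(b))$; and by Lemma \ref{13-46-48-time} lines \ref{13-46}--\ref{13-48} cost $\Theta(b \ln(b))$. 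Lemma \ref{Algo13mintime} has already shown that the sum of the first two of these contributions is minimized over choices of $a$ at $a = \Theta((n/\ln(\ln(n)))^{2/3})$, where those two parts become $\Theta(n^{2/3} (\ln(\ln(n)))^{1/3})$ each, and that this same quantity is a lower bound on the total running time for \emph{every} choice of $a$.

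It therefore remains only to check that, with $a = \Theta((n/\ln(\ln(n)))^{2/3})$ (so $b = \Theta(n^{1/3} (\ln(\ln(n)))^{2/3})$), none of the remaining contributions exceeds $\Theta(n^{2/3} (\ln(\ln(n)))^{1/3})$: line \ref{13-27} is $\Theta(n^{2/3} (\ln(\ln(n)))^{1/3})$ by Lemma \ref{13-27-time}, the outer overhead and lines \ref{13-18}--\ref{13-20} are $O(a) = o(n^{2/3})$, line \ref{13-38} is $O(\sqrt{n} \ln(n)) = o(n^{2/3})$, and lines \ref{13-46}--\ref{13-48} are $O(b \ln(n)) = o(n^{2/3})$. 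Adding up, the running time is $\Theta(n^{2/3} (\ln(\ln(n)))^{1/3})$; since this matches the lower bound of Lemma \ref{Algo13mintime}, the stated $a$ is indeed an optimal parameter selection. For the space bound, the only structures larger than $O(1)$ are the arrays $\mathcal{M}$ and $M^\prime$, each of size $b$, and the segmented M\"{o}bius siever; taking the siever's segment length to be $\Theta(b)$, it uses $O(b)$ space for the current segment together with $O(\sqrt{a}) = O(n^{1/3})$ space for the sieving primes, all dominated by $b = \Theta(n^{1/3} (\ln(\ln(n)))^{2/3})$. Hence the total space is $\Theta(b) = \Theta(n^{1/3} (\ln(\ln(n)))^{2/3})$.

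I do not expect a real obstacle: every delicate estimate was already carried out in Lemmas \ref{13-27-time}, \ref{13-38-time}, and \ref{13-46-48-time}, so this proof is bookkeeping --- collecting the per-loop costs, singling out the M\"{o}bius siever, line \ref{13-16}, and line \ref{13-27} as the three dominant $\Theta(n^{2/3} (\ln(\ln(n)))^{1/3})$ terms at the chosen $a$, and verifying the rest is lower order. The one point needing a little care is the claim of \emph{optimality}: since the dependence of line \ref{13-27} on general $a$ is never worked out, optimality is argued exactly as in Lemma \ref{Algo13mintime} --- the siever and line \ref{13-16} alone force a running time of $\Omega(n^{2/3} (\ln(\ln(n)))^{1/3})$ regardless of $a$, and the chosen $a$ attains this order, so no other choice does asymptotically better.
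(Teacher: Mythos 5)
Your proof is correct and takes essentially the same approach as the paper: it assembles the time bound from the same lemmas (\ref{13-M-time}, \ref{13-16-time}, \ref{Algo13mintime} for the lower bound attained by the siever and line \ref{13-16}; \ref{13-27-time}, \ref{13-38-time}, \ref{13-46-48-time} to confirm the rest is not larger at the chosen $a$), and argues optimality from the same lower bound. You spell out the $O(a)$ outer-loop overhead and the space accounting for $\mathcal{M}$, $M^\prime$, and the segmented siever more explicitly than the paper, which simply appeals to inspection, but the argument is the same.
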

\begin{proof}
The correctness of the algorithm is evident by its development, and the space complexity follows by inspection.

For the time complexity, Lemmas \ref{13-M-time}, \ref{13-16-time}, and \ref{Algo13mintime} prove that the minimum possible time for a subsection of the algorithm is $\Theta\left( n^{2/3} \cdot (\ln(\ln(n)))^{1/3} \right)$ and that this is obtained by selecting $a = \Theta\left((n/\ln(\ln(n)))^{2/3}\right)$, while Lemmas \ref{13-27-time}, \ref{13-38-time}, and \ref{13-46-48-time} establish that, with this parmeter selection, the rest of the algorithm does not exceed this time complexity.
\end{proof}

\section{Computational results} \label{ComputationalResults}

The data in Table \ref{Phitable} was procured by running \texttt{totientsum.py} under PyPy3 on a computer with an AMD Ryzen 9 7950X CPU and 128 GB of RAM.  The times are wall-clock times; the space requirements are maximum resident-set sizes reported by the command \texttt{/usr/bin/time -v}.

\begin{table}[h]
\centering
\begin{tabular}{|c|l|r|r|r|r|} \hline
\multirow{2}{*}{$n$} & \multicolumn{1}{c|}{\multirow{2}{*}{$\Phi(10^n)$ \qquad \seqnum{A064018}($n$)}} & \multicolumn{3}{c|}{Time (s)} & Memory \\\cline{3-5}
   &                                        & Phase 1 & Phase 2 &  Total &    (kb) \\\hline
13 & 30396355092702898919527444             &       5 &       8 &     13 &   95612 \\\hline
14 & 3039635509270144893910357854           &      22 &      33 &     56 &  122480 \\\hline
15 & 303963550927013509478708835152         &      96 &     154 &    250 &  193868 \\\hline
16 & 30396355092701332166351822199504       &     459 &     754 &   1214 &  331492 \\\hline
17 & 3039635509270133156701800820366346     &    2114 &    3803 &   5916 &  712492 \\\hline
18 & 303963550927013314319686824781290348   &   10135 &   18691 &  28826 & 1695468 \\\hline
19 & 30396355092701331435065976498046398788 &  614160 &  162208 & 776388 & 6586924 \\\hline
\end{tabular}
\caption{Computation of some values of $\Phi$.}
\label{Phitable}
\end{table}

The values of $\Phi(10^n)$ for $n \in \set{13, \ldots, 18}$ were already known; the values computed here match those at \url{https://oeis.org/A064018/b064018.txt}.

The computation of $\Phi(10^{19})$ is new.  Based on the algorithm's space- and time-complexities, one would expect the computation of $\Phi(10^{19})$ to take about 160,000 seconds and occupy about 3.6 GB.  The reason for the deviation from this expectation is presumably that $10^{18}$ fits in a single 64-bit machine word while $10^{19}$ does not.  To guard against computer glitches, the computation was run twice; the results matched.

\section{Supporting lemmas} \label{SupportingLemmas}

\begin{lemma} \label{lemma1}
Let $n$ and $y$ be integers such that $1 \leq y \leq \sqrt{n}$.  Then
\eqn{\sqrt{\frac{n}{y}} \geq \frac{\floor{n/y}}{\isqrt{n}+1}.}
\end{lemma}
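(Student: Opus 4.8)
The plan is to reduce the whole statement to the single elementary fact that $\isqrt{n}+1 > \sqrt{n}$. First I would discard the floor on the right-hand side using the trivial bound $\floor{n/y} \leq n/y$, so that it suffices to prove the stronger-looking but cleaner inequality $\dfrac{n/y}{\isqrt{n}+1} \leq \sqrt{n/y}$. Since $\sqrt{n/y} > 0$ and $\isqrt{n}+1 > 0$, multiplying through and cancelling one factor of $\sqrt{n/y}$ shows this is equivalent to $\sqrt{n/y} \leq \isqrt{n}+1$.

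Next I would prove $\sqrt{n/y} \leq \isqrt{n}+1$ by a two-link chain. Because $y \geq 1$ we have $n/y \leq n$, hence $\sqrt{n/y} \leq \sqrt{n}$. Because $\isqrt{n} = \floor{\sqrt{n}}$ satisfies $\isqrt{n} > \sqrt{n}-1$, we have $\sqrt{n} < \isqrt{n}+1$. Concatenating gives $\sqrt{n/y} \leq \sqrt{n} < \isqrt{n}+1$, which is in fact strict. Running the reductions backwards then produces $\dfrac{\floor{n/y}}{\isqrt{n}+1} \leq \dfrac{n/y}{\isqrt{n}+1} \leq \sqrt{n/y}$, which is the claim.

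I do not expect a genuine obstacle here: the argument is a short chain of monotonicity steps, and the hypothesis $y \leq \sqrt{n}$ is never actually invoked (only $y \geq 1$ is needed, which is implicit in $y$ being a positive integer). The one spot that warrants a sentence of care is the inequality $\isqrt{n}+1 > \sqrt{n}$, which I would note holds regardless of whether $n$ is a perfect square: if $n$ is a square then $\isqrt{n}+1 = \sqrt{n}+1 > \sqrt{n}$, and otherwise $\isqrt{n} < \sqrt{n} < \isqrt{n}+1$ by definition of the floor.
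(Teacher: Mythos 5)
Your proof is correct and is essentially the same argument as the paper's, just presented in the reverse direction: the paper starts from $\isqrt{n}+1 \geq \sqrt{n}$ and pushes forward through the same three monotonicity steps (drop to $\sqrt{n/y}$, multiply by $\sqrt{n/y}/(\isqrt{n}+1)$, replace $n/y$ by $\floor{n/y}$), whereas you peel back from the conclusion to that same fact and then run it forward. Your observation that the hypothesis $y \leq \sqrt{n}$ is not actually used (only $y \geq 1$) is accurate and applies equally to the paper's proof.
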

\begin{proof}
We begin with $\isqrt{n} + 1 \geq \sqrt{n}$.  We can then weaken this to
\eqn{\isqrt{n} + 1 \geq \sqrt{\frac{n}{y}}.}
Multiplying by $\sqrt{n/y}/(\isqrt{n}+1)$ yields
\eqn{\sqrt{\frac{n}{y}} \geq \frac{n/y}{\isqrt{n}+1},}
which we can weaken to
\eqn{\sqrt{\frac{n}{y}} \geq \frac{\floor{n/y}}{\isqrt{n}+1},}
as desired.
\end{proof}

\begin{lemma} \label{pmoiuvwq}
With $b$, $n$, $x$, and $y$ as in lines \ref{12-45}--\ref{12-47},
\eqn{\frac{n}{b+1} < \floordiv{\floor{n/y}}{x} \implies x \leq \frac{\floor{n/y}}{\isqrt{n}+1}.}
\end{lemma}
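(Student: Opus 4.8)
The plan is to recast everything in terms of the single integer $k \defeq \floordiv{\floor{n/y}}{x}$, since both the hypothesis and the conclusion of the asserted implication are really statements about $k$. Writing $v = \floor{n/y}$, the hypothesis $\frac{n}{b+1} < \floordiv{v}{x}$ is literally $k > \frac{n}{b+1}$. For the conclusion, note that because $\isqrt{n}+1$ is an integer, $x \leq \frac{v}{\isqrt{n}+1}$ is equivalent to $(\isqrt{n}+1)\,x \leq v$, hence to $\isqrt{n}+1 \leq v/x$, hence to $k = \floor{v/x} \geq \isqrt{n}+1$. So the lemma reduces to the single implication $k > \frac{n}{b+1} \implies k \geq \isqrt{n}+1$.

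First I would pin down the size of $b$. Since $a = \floor{\softTheta(n^{2/3})}$ is of strictly larger order than $\sqrt{n}$, for all $n$ large enough that the algorithm is used we have $a > 2\sqrt{n}$, and therefore $b = \floor{n/a} \leq n/a < \sqrt{n}/2$; in particular $b+1 \leq \isqrt{n}$. Multiplying by $\isqrt{n}$ and using $(\isqrt{n})^2 \leq n$ yields $(b+1)\,\isqrt{n} \leq n$, that is, $\frac{n}{b+1} \geq \isqrt{n}$. Feeding this into the hypothesis gives $k > \frac{n}{b+1} \geq \isqrt{n}$, and since $k$ is an integer strictly exceeding $\isqrt{n}$ we get $k \geq \isqrt{n}+1$, which is the conclusion after unwinding the equivalences above. (The loop bounds $2 \leq x \leq \isqrt{v}$ inherited from lines \ref{12-45}--\ref{12-47} are not needed.)

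The delicate point --- and genuinely the only way the lemma could fail --- is the inequality $b+1 \leq \isqrt{n}$. The merely structural fact $a \geq \isqrt{n}+1$ would permit $b = \isqrt{n}$ for non-square $n$, and then a small instance such as $n = m^2+1$, $b = m$, $y = 1$, $x = m$ (so that $k = m$, the hypothesis holding but the conclusion failing) breaks the statement. So the argument must invoke that $a$ has order $n^{2/3}$, not just that it exceeds $\isqrt{n}$; I would record $b = \softTheta(n^{1/3}) = o(\sqrt{n})$ at the outset to make this explicit. Everything after that is a two-line computation together with the observation that an integer strictly larger than $\isqrt{n}$ is at least $\isqrt{n}+1$.
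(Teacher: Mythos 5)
Your proof is correct and follows essentially the same route as the paper: both hinge on the observation that $b = \softTheta(n^{1/3})$ forces $n/(b+1)$ to be large compared with $\isqrt{n}$ (the paper asserts the marginally stronger $n/(b+1) > \isqrt{n}+1$ where you only need $n/(b+1) \geq \isqrt{n}$), after which the conclusion is a short rearrangement. Your reformulation in terms of $k = \floordiv{\floor{n/y}}{x}$, which exploits that both $k$ and $\isqrt{n}+1$ are integers to keep the outer floor rather than drop it, is a tidy streamlining and your remark about why $a \geq \isqrt{n}+1$ alone would not suffice is a genuine value-add, but the decomposition and the key fact invoked are the same as the paper's.
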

\begin{proof}
The hypothesis can be weakened by dropping the outer floor function, yielding
\eqn{\frac{n}{b+1} < \frac{\floor{n/y}}{x}}
and therefore
\eqn{x \leq \frac{\floor{n/y}}{n/(b+1)}.}
Since $b = \softTheta(n^{1/3})$, we have $n/(b+1) > \isqrt{n} + 1$, and the conclusion follows immediately.
\end{proof}

\begin{lemma} \label{ouawrt4coi}
Suppose that $1 \leq t \leq b \leq n^{1/3}$.  Then for sufficiently large $b$ and $n$, 
\eqn{\isqrt{n/t} \leq 1 + \floordiv{n}{t \cdot (b+1)}.}
\end{lemma}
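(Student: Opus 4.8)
The plan is to strip off the floor functions first and reduce everything to a single polynomial inequality. Concretely, I would show that it suffices to prove the floorless statement
\eqn{\sqrt{\frac{n}{t}} \leq 1 + \frac{n}{t \cdot (b+1)}.}
Granting that, apply $\floor{\cdot}$ to both sides: since $\floor{1+z} = 1 + \floor{z}$ for every real $z$, we get $\isqrt{n/t} = \floor{\sqrt{n/t}} \leq \floor{1 + \frac{n}{t(b+1)}} = 1 + \floordiv{n}{t\cdot(b+1)}$, which is exactly the conclusion. (This route in fact dispenses with the ``sufficiently large'' hypothesis entirely; I would keep the weaker stated form only because that is all that is needed where the lemma is invoked.)

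So the real content is the displayed floorless inequality under the hypotheses $1 \leq t \leq b \leq n^{1/3}$. I would substitute $u \defeq \sqrt{n/t} \geq 0$, so that $n/t = u^2$ and the inequality becomes $u \leq 1 + u^2/(b+1)$; multiplying through by $b+1 > 0$ turns this into the quadratic inequality $q(u) \geq 0$, where $q(u) \defeq u^2 - (b+1)u + (b+1)$.

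Next I would bound $u$ from below. The bound $t \leq b$ gives $u = \sqrt{n/t} \geq \sqrt{n/b}$, and the bound $b \leq n^{1/3}$, i.e.\ $n \geq b^3$, gives $n/b \geq b^2$, hence $u \geq b$. Then I would observe that the upward parabola $q$ has its vertex at $u = (b+1)/2 \leq b$, so $q$ is nondecreasing on $[b,\infty)$; therefore $q(u) \geq q(b) = b^2 - (b+1)b + (b+1) = 1 > 0$ for every $u \geq b$, in particular for our value of $u$. This yields $q(u) \geq 0$ and closes the argument.

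I do not expect any genuine obstacle here: the only point requiring a little care is the bookkeeping with the floor functions at the reduction step, and the quadratic estimate itself is entirely elementary. If one preferred to follow the statement literally and argue asymptotically, one could instead note $\sqrt{n/t} \leq \sqrt{n/t}$ and $\floordiv{n}{t(b+1)} \geq \frac{n}{t(b+1)} - 1$ and compare leading orders as $b,n \to \infty$, but the clean polynomial argument above is preferable since it is exact.
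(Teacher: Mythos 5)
Your proof is correct. It proves the same polynomial inequality as the paper does, but by a slightly different (and arguably cleaner) route: the paper works directly with $z = n/t$ and the function $f(z) = \left(1 + \frac{z}{b+1}\right)^2 - z$, showing $f(b^2) > 0$, $f'(b^2) > 0$, and $f'' > 0$, then invoking a second-order Taylor/convexity argument to conclude $f(z) \geq 0$ for $z \geq b^2$. You instead substitute $u = \sqrt{n/t}$, which turns the target into the explicit upward parabola $q(u) = u^2 - (b+1)u + (b+1) \geq 0$; since the vertex sits at $(b+1)/2 \leq b$ and $q(b) = 1$, the inequality is immediate for $u \geq b$, which follows from $n/t \geq b^2$ just as in the paper. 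Both reductions use the same key lower bound $n/t \geq b^2$ coming from $t \leq b \leq n^{1/3}$; your version replaces the derivative check with a vertex computation, avoiding calculus entirely. Your observation that the result holds exactly (so the ``sufficiently large'' hypothesis is unnecessary) is correct, and in fact the paper's own proof also establishes the exact inequality even though its statement hedges. Your handling of the floors via $\floor{1+z} = 1+\floor{z}$ is fine, and the implicit use of $\floor{\sqrt{x}} = \floor{\sqrt{\floor{x}}}$ for nonnegative real $x$ (needed to interpret $\isqrt{n/t}$) is a standard identity.
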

\begin{proof}
Consider the function
\eqn{f(z) = \left( 1 + \frac{z}{b+1} \right)^2 - z.}
Observe that $f(b^2)$, $f'(b^2)$, and $f''$ are all positive.  Therefore, if $z \geq b^2$, then $f(z) \geq 0$.

Now consider $f(n/t)$.  From the hypotheses of this lemma, we have $n/t \geq b^2$.  Therefore,
\eqn{0 \leq f(n/t) = \left( 1 + \frac{n/t}{b+1} \right)^2 - \frac{n}{t}}
\eqn{\frac{n}{t} \leq \left( 1 + \frac{n}{t \cdot (b+1)} \right)^2}
\eqn{\sqrt{n/t} \leq 1 + \frac{n}{t \cdot (b+1)}.}
Applying the floor function to both sides then yields the desired result.
\end{proof}

\begin{lemma} \label{sqrtapprox2}
$\disp \lim_{u\rightarrow\infty} u^3 \cdot \left( \frac{1}{2} - \frac{1}{u} - \sqrt{\frac{1}{4}-\frac{1}{u}} - \frac{1}{u^2} \right) = 2$.
\end{lemma}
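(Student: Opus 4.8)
The plan is to substitute $u = 1/\varepsilon$ and rationalise, converting the difference of a polynomial and a square root into a single rational expression whose limit is transparent.

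First I would set $\varepsilon = 1/u$, so that the quantity to be evaluated becomes $\varepsilon^{-3}\bigl(\tfrac12 - \varepsilon - \varepsilon^2 - S\bigr)$, where $S = \sqrt{\tfrac14 - \varepsilon}$; this is well defined once $0 < \varepsilon < \tfrac14$, i.e. $u > 4$, and $\varepsilon \to 0^+$ as $u \to \infty$. Next I would multiply and divide by the conjugate $\tfrac12 - \varepsilon - \varepsilon^2 + S$, using the elementary expansion $\bigl(\tfrac12 - \varepsilon - \varepsilon^2\bigr)^2 = \tfrac14 - \varepsilon + 2\varepsilon^3 + \varepsilon^4$, so that $\bigl(\tfrac12 - \varepsilon - \varepsilon^2\bigr)^2 - S^2 = 2\varepsilon^3 + \varepsilon^4$. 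This yields

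\eqn{\varepsilon^{-3}\left( \tfrac12 - \varepsilon - \varepsilon^2 - S \right) = \frac{\varepsilon^{-3}\left(2\varepsilon^3 + \varepsilon^4\right)}{\tfrac12 - \varepsilon - \varepsilon^2 + S} = \frac{2 + \varepsilon}{\tfrac12 - \varepsilon - \varepsilon^2 + S}.}

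Finally, I would let $\varepsilon \to 0^+$: since $S \to \tfrac12$ and $\tfrac12 - \varepsilon - \varepsilon^2 \to \tfrac12$, the denominator tends to $1$ and the numerator to $2$, so the expression tends to $2$; undoing the substitution gives the stated limit as $u \to \infty$.

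There is essentially no obstacle here. The only points deserving a word of care are (i) that the denominator $\tfrac12 - \varepsilon - \varepsilon^2 + S$ stays bounded away from $0$ near $\varepsilon = 0$ — immediate, as it converges to $1$ — so the conjugate manipulation is legitimate, and (ii) the one-line algebra verifying $\bigl(\tfrac12 - \varepsilon - \varepsilon^2\bigr)^2 - \bigl(\tfrac14 - \varepsilon\bigr) = 2\varepsilon^3 + \varepsilon^4$. Alternatively, one could reach the same conclusion via the binomial series $\sqrt{\tfrac14 - \varepsilon} = \tfrac12 - \varepsilon - \varepsilon^2 - 2\varepsilon^3 + O(\varepsilon^4)$, obtained from $\sqrt{1-x} = 1 - \tfrac{x}{2} - \tfrac{x^2}{8} - \tfrac{x^3}{16} - \cdots$ with $x = 4\varepsilon$, which exhibits the bracketed quantity as $2\varepsilon^3 + O(\varepsilon^4)$ directly; I would still prefer the rationalisation route, since it requires no bookkeeping about convergence or remainder terms of the series.
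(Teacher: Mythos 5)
Your proof is correct, and it takes a genuinely different route from the paper's. Both proofs begin with the same substitution $\varepsilon = 1/u$ (the paper writes $x$), reducing the claim to $\lim_{\varepsilon\to 0^+} \varepsilon^{-3}\bigl(\tfrac12 - \varepsilon - \varepsilon^2 - \sqrt{\tfrac14 - \varepsilon}\bigr) = 2$, but from there they diverge. The paper treats this as a $\tfrac{0}{0}$ indeterminate form and applies L'H\^{o}pital's rule three times, differentiating the square root repeatedly and tracking the resulting negative fractional powers until the denominator becomes a constant. You instead multiply by the conjugate $\tfrac12 - \varepsilon - \varepsilon^2 + S$, exploiting the identity $\bigl(\tfrac12 - \varepsilon - \varepsilon^2\bigr)^2 - \bigl(\tfrac14 - \varepsilon\bigr) = 2\varepsilon^3 + \varepsilon^4$, which collapses the expression to $(2+\varepsilon)/\bigl(\tfrac12 - \varepsilon - \varepsilon^2 + S\bigr)$ with an obvious limit. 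Your route is more elementary --- it needs no calculus beyond continuity of the square root, whereas the paper needs three rounds of differentiation --- and it is arguably more robust, since the $\varepsilon^3$ in front is not a coincidence but appears exactly from the leading term of $(\tfrac12-\varepsilon-\varepsilon^2)^2 - S^2$. The paper's L'H\^{o}pital route has the virtue of being completely mechanical, requiring no cleverness in choosing the conjugate expansion, but your conjugate-rationalization argument is shorter and makes the reason for the value $2$ transparent. Your side remark about the binomial series is also valid and in fact directly echoes the way $S$ is used in Lemma~\ref{keysmash_b}, where $S/n$ is expanded as a series in $1/u$ with Catalan-number coefficients.
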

\begin{proof}
Make the substitution $u=1/x$ to obtain
\eqn{\lim_{x\rightarrow0^+} x^{-3} \cdot \left( \frac{1}{2} - x - \sqrt{\frac{1}{4}-x} - x^2 \right)}
\eqn{ = \lim_{x\rightarrow0^+} \frac{1 - 2x - (1-4x)^{1/2} - 2x^2}{2x^3}.}
L'H\^{o}pital's rule yields
\eqn{ = \lim_{x\rightarrow0^+} \frac{0 - 2 - (1/2)(1-4x)^{-1/2}(-4) - 4x}{6x^2}}
\eqn{ = \lim_{x\rightarrow0^+} \frac{-2 + 2(1-4x)^{-1/2} - 4x}{6x^2}.}
L'H\^{o}pital's rule yields
\eqn{ = \lim_{x\rightarrow0^+} \frac{0 + 2(-1/2)(1-4x)^{-3/2}(-4) - 4}{12x}}
\eqn{ = \lim_{x\rightarrow0^+} \frac{4(1-4x)^{-3/2} - 4}{12x}.}
L'H\^{o}pital's rule yields
\eqn{ = \lim_{x\rightarrow0^+} \frac{4(-3/2)(1-4x)^{-5/2}(-4) - 0}{12}}
\eqn{ = \lim_{x\rightarrow0^+} \frac{24(1-4x)^{-5/2}}{12}}
\eqn{ = 2,}
as desired.
\end{proof}

\begin{lemma} \label{okiagfvwr}
Suppose that $2 \leq t$ and $b = o(\sqrt{n})$.  Then for sufficiently large $n$,
\eqn{\floordiv{n}{t \cdot \left(1 + b \cdot \dfloordiv{\isqrt{n}}{b}\right)} \leq \isqrt{n/t}.}
\end{lemma}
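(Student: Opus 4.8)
The plan is to strip away the floor functions, reducing the claim to a clean polynomial inequality, and then to bound the two sides crudely using $t\geq 2$ and $b=o(\sqrt n)$.

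First I would observe that $\isqrt{n/t}=\floor{\sqrt{n/t}}$, so by monotonicity of the floor function it suffices to prove the stronger, floor-free statement
\eqn{\frac{n}{t \cdot \left(1 + b \cdot \dfloordiv{\isqrt{n}}{b}\right)} \leq \sqrt{\frac{n}{t}}.}
Since $\sqrt{n/t}>0$, dividing both sides by it turns this into the equivalent inequality
\eqn{\sqrt{\frac{n}{t}} \leq 1 + b \cdot \dfloordiv{\isqrt{n}}{b}.}

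Next I would bound the two sides separately. On the left, $t\geq 2$ gives $\sqrt{n/t}\leq\sqrt{n/2}$. On the right, note first that for $n$ large enough the hypothesis $b=o(\sqrt n)$ forces $b<\isqrt n$, so $\dfloordiv{\isqrt n}{b}\geq 1$; moreover the standard estimate $b\cdot\dfloordiv{\isqrt n}{b} > \isqrt n - b$ combined with $\isqrt n > \sqrt n - 1$ yields $1 + b\cdot\dfloordiv{\isqrt n}{b} > \sqrt n - b$. Hence it is enough to establish $\sqrt{n/2}\leq \sqrt n - b$, i.e.\ $b \leq \left(1-\tfrac{1}{\sqrt 2}\right)\sqrt n$.

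Finally, since $b=o(\sqrt n)$, the inequality $b\leq\left(1-\tfrac{1}{\sqrt 2}\right)\sqrt n$ holds for all sufficiently large $n$, which finishes the argument. There is no genuine obstacle here; the only points requiring a little care are the opening reduction — one must check that replacing $\isqrt{n/t}$ by $\sqrt{n/t}$ truly strengthens the claim (it does, by monotonicity of $\floor{\cdot}$) — and the routine book-keeping with the nested floors $\isqrt{\cdot}$ and $\dfloordiv{\cdot}{\cdot}$ on the right-hand side, in particular confirming that $\dfloordiv{\isqrt n}{b}$ is at least $1$ for large $n$ so that the right side is genuinely of order $\sqrt n$.
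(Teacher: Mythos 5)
Your proposal is correct and follows essentially the same route as the paper: both arguments strip the floors, reduce to showing $\sqrt{n/t} \leq 1 + b\cdot\floor{\isqrt{n}/b}$, bound the left side by $\sqrt{n/2}$ via $t\geq 2$, bound the right side below by a quantity of the form $\sqrt{n} - O(b)$, and then invoke $b = o(\sqrt{n})$ to close the gap for large $n$. The only differences are cosmetic choices in the intermediate floor bookkeeping (the paper routes through $\isqrt{n}-(\isqrt{n}\bmod b)+1$, while you route through $\isqrt{n} > \sqrt{n}-1$ and $b\floor{\isqrt{n}/b} > \isqrt{n}-b$).
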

\begin{proof}
We begin with
\eqn{\sqrt{\frac{n}{2}} < \isqrt{n} - b + 1 < \isqrt{n} - (\isqrt{n} \bmod b) + 1 = b \cdot \floordiv{\isqrt{n}}{b} + 1.}
The first inequality is true for sufficiently large $n$ because $b = o(n^{1/2})$; the rest is arithmetic.  By hypothesis, $t \geq 2$, so we can weaken this to
\eqn{\sqrt{\frac{n}{t}} < 1 + b \cdot \floordiv{\isqrt{n}}{b}}
and therefore
\eqn{\frac{n/t}{1 + b \cdot \dfloordiv{\isqrt{n}}{b}} < \sqrt{n/t}.}
Applying the floor function to both sides then yields the desired result.
\end{proof}

\begin{lemma} \label{lmmfanb}
$U \leq 2 \sqrt{n}$.
\end{lemma}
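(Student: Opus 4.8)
The plan is to rewrite $U$ in terms of the remainder of $\isqrt{n}$ modulo $b$. Put $r \defeq \isqrt{n} \bmod b = \isqrt{n} - b\floordiv{\isqrt{n}}{b}$, so that $0 \le r \le b-1$. Substituting, the numerator of $U$ becomes $n\cdot r$ while the first factor of its denominator is unchanged, giving
\eqn{U = \frac{n\,r}{\left(1 + b\dfloordiv{\isqrt{n}}{b}\right)\left(\isqrt{n}+1\right)}.}
The idea is then to bound the two ``fractional'' pieces of this expression separately: the piece $\dfrac{r}{1 + b\floordiv{\isqrt{n}}{b}}$ by $1$, and the piece $\dfrac{n}{\isqrt{n}+1}$ by $\sqrt{n}$.

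For the first bound, I would observe that in the regime of $b$ and $n$ relevant to Algorithm \ref{Algo13} we have $b \le \isqrt{n}$ — recall that $b = \softTheta(n^{1/3})$, a fact already used in Lemma \ref{pmoiuvwq} — so $\floordiv{\isqrt{n}}{b} \ge 1$ and hence $1 + b\floordiv{\isqrt{n}}{b} \ge 1 + b > b - 1 \ge r$; thus $\dfrac{r}{1 + b\floordiv{\isqrt{n}}{b}} < 1$. For the second bound, $\isqrt{n} + 1 > \sqrt{n}$ immediately gives $\dfrac{n}{\isqrt{n}+1} < \dfrac{n}{\sqrt{n}} = \sqrt{n}$. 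Multiplying the two estimates yields $U < 1 \cdot \sqrt{n} = \sqrt{n} \le 2\sqrt{n}$, which is the claim (with room to spare).

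I do not anticipate any real obstacle: the computation is elementary once the remainder substitution is made. The only point requiring a word of justification is the inequality $1 + b\floordiv{\isqrt{n}}{b} > r$, which reduces to $\floordiv{\isqrt{n}}{b} \ge 1$, i.e.\ $b \le \isqrt{n}$; this is automatic for the values of $b$ and $n$ at which the algorithm is actually run and is invoked elsewhere in the paper, so I would simply cite it rather than belabor it.
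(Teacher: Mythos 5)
Your proof is correct, and it's a genuinely cleaner argument than the one the paper gives. The paper's proof works backwards: it first establishes, via $b = o(\sqrt{n})$, the inequality $n \leq 2\left(1 + \isqrt{n}\right)^2 - 2b\left(\isqrt{n}+1\right) = 2\left(1 + \isqrt{n} - b\right)\left(\isqrt{n}+1\right)$, weakens $b$ to $\isqrt{n}\bmod b$ to get $n \leq 2\left(1 + b\floordiv{\isqrt{n}}{b}\right)\left(\isqrt{n}+1\right)$, then multiplies through by $\sqrt{n}$ and shrinks $\sqrt{n}$ down to $\isqrt{n} - b\floordiv{\isqrt{n}}{b}$ to recover $U$. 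You instead factor $U$ directly as the product $\dfrac{r}{1 + b\floordiv{\isqrt{n}}{b}} \cdot \dfrac{n}{\isqrt{n}+1}$ where $r = \isqrt{n}\bmod b$, and bound each factor separately; the first factor is bounded by $1$ precisely because $r \le b-1 < 1 + b \le 1 + b\floordiv{\isqrt{n}}{b}$ once $\floordiv{\isqrt{n}}{b} \ge 1$, and the second is strictly less than $\sqrt{n}$. What this buys is a simpler chain and a strictly sharper conclusion, $U < \sqrt{n}$, while the paper only obtains $U \leq 2\sqrt{n}$ (which is all it needs, since the lemma is used merely to show $U\ln(U) = O(\sqrt{n}\ln n)$). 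Both proofs rely on essentially the same regime assumption — the paper invokes $b = o(\sqrt{n})$, and your version needs the slightly weaker and equally automatic $b \le \isqrt{n}$ — and you have correctly flagged that as the only hypothesis doing real work.
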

\begin{proof}
Since $b = o(\sqrt{n})$, we have for sufficiently large $n$
\eqn{n \leq 2 \cdot \left(1 + \isqrt{n}\right)^2 - 2b \cdot \left(\isqrt{n}+1\right)}
\eqn{ = 2 \cdot \left(1 + \isqrt{n} - b\right)\left(\isqrt{n}+1\right)}
\eqn{ \leq 2 \cdot \left(1 + \isqrt{n} - (\isqrt{n} \bmod b)\right)\left(\isqrt{n}+1\right)}
\eqn{ = 2 \cdot \left(1 + b \cdot \floordiv{\isqrt{n}}{b}\right)\left(\isqrt{n}+1\right),}
which is equivalent to
\eqn{2 \sqrt{n} \geq n \cdot \frac{\sqrt{n}}{\left(1 + b \cdot \dfloordiv{\isqrt{n}}{b}\right)\left(\isqrt{n}+1\right)}}
\eqn{ \geq n \cdot \frac{\isqrt{n}}{\left(1 + b \cdot \dfloordiv{\isqrt{n}}{b}\right)\left(\isqrt{n}+1\right)}}
\eqn{ \geq n \cdot \frac{\isqrt{n} - b \cdot \dfloordiv{\isqrt{n}}{b}}{\left(1 + b \cdot \dfloordiv{\isqrt{n}}{b}\right)\left(\isqrt{n}+1\right)},}
which is the desired result.
\end{proof}

\begin{lemma} \label{BigIntegral}
\eqn{\sum_{\substack{2b \leq x \leq \sqrt{n} \\ b \mid x}} \; \sum_{t=1}^b \left( 1 + \max\left( \min \left( \isqrt{n/t} , \floordiv{n}{t \cdot (1 + x - b)} \right) - \left( 1 + \floordiv{n}{t \cdot (x+1)} \right) , 0 \right) \right)}
\eqn{= \Theta\left( a \cdot \ln(n^2 a^{-3}) \right).}
\end{lemma}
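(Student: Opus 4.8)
The plan is to recognise the displayed double sum as, up to negligible corrections, a count of integer points, sum out one coordinate for free, and reduce to a one-variable sum that splits into two transparent pieces. Reindex the outer variable by $j = x/b$, so $x$ ranges over $2 \le j \le J$, where $J$ is the largest integer with $bJ \le \sqrt n$; recall $b = \Theta(n/a)$, $a = \omega(\sqrt n)$ in the range of interest, and $b^3/n = \Theta(n^2a^{-3})$. For fixed $j$ and $t$, the quantity $1 + \max(0,\ell_{\max}-\ell_{\min})$ is, to within an additive term in $\{0,1\}$, the number of integers $\ell$ with $\ell \le \sqrt{n/t}$ and $b(j-1)+1 \le \floordiv{n}{\ell t} \le bj$ (this is exactly how $\ell_{\min},\ell_{\max}$ are defined in Algorithm \ref{Algo13}). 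Since the windows $\bigl[\,b(j-1)+1,\ bj\,\bigr]$, $j = 2,\dots,J$, partition $\bigl[\,b+1,\ bJ\,\bigr]$, summing over $j$ collapses the middle constraint; summing over $t$ too, and then discarding all floors and replacing $b+1$ by $b$ and $bJ+1$ by $\sqrt n$ (each step costing only $o\!\bigl(a\ln(n^2a^{-3})\bigr)$ once one uses $a = \omega(\sqrt n)$, and likewise the $\{0,1\}$-slippage and the leading ``$1+$'' contribute only $O(\sqrt n)$), the whole double sum is, up to that error,
\eqn{T \;:=\; \sum_{t=1}^{b}\left(\min\!\left(\sqrt{\tfrac nt},\ \frac{n}{tb}\right) - \frac{\sqrt n}{t}\right)^{+}.}

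Now estimate $T$. The two arguments of the inner $\min$ cross at $t = n/b^2$: for $t \le n/b^2$ the summand is $\bigl(\sqrt{n/t} - \sqrt n/t\bigr)^{+} = \Theta\!\bigl(\sqrt{n/t}\,\bigr)$ for $t \ge 2$ (and $0$ at $t=1$), while for $t > n/b^2$ it equals $n/(tb) - \sqrt n/t = \bigl(n/(tb)\bigr)\bigl(1 - b/\sqrt n\bigr) = \Theta\!\bigl(n/(tb)\bigr)$, using $b < \sqrt n$. Comparing the monotone summands to integrals,
\eqn{T \;=\; \Theta\!\Bigl(\sqrt n\sum_{2 \le t \le n/b^2} t^{-1/2}\Bigr) + \Theta\!\Bigl(\frac nb\sum_{n/b^2 < t \le b} t^{-1}\Bigr) \;=\; \Theta\!\Bigl(\frac nb\Bigr) + \Theta\!\Bigl(\frac nb\ln\frac{b^3}{n}\Bigr) \;=\; \Theta(a) + \Theta\!\bigl(a\ln(n^2a^{-3})\bigr).}
Since $n^2a^{-3}$ stays bounded away from $1$ (indeed $\to\infty$) in the parameter range where this lemma is applied, the $\Theta(a)$ term is absorbed, so $T = \Theta\!\bigl(a\ln(n^2a^{-3})\bigr)$; adding back the $o\!\bigl(a\ln(n^2a^{-3})\bigr)$ error from the reduction completes the proof.

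The one delicate point---the step a more direct attack would get wrong---is the range $t \le n/b^2$, i.e.\ the batches in which $\sqrt{n/t}$ beats $n/(t(1+x-b))$ in the original $\min$. There each term can be as large as $\Theta(b)$, so a batchwise estimate that naively multiplies a ``typical'' term by the $\Theta(b)$ available values of $t$ would predict a spurious $\Theta(b^2)$, far above the claimed bound. The resolution---visible in the collapsed count above, or via a careful batchwise accounting---is that the set of $t$ on which that term is genuinely positive has total length only $\Theta(n/b^2)$ (width $\Theta\!\bigl(n/(b^2j^3)\bigr)$ in batch $j$), so its contribution is the convergent-series quantity $\Theta(n/b) = \Theta(a)$, dominated by the main $\Theta\!\bigl(a\ln(n^2a^{-3})\bigr)$ coming from $t > n/b^2$. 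The rest is routine checking that all the discretisation and sum-versus-integral errors are $o\!\bigl(a\ln(n^2a^{-3})\bigr)$---precisely where $a = \omega(\sqrt n)$ is used.
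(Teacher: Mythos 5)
Your proof is correct, and it takes a genuinely different---and considerably cleaner---route than the paper's.  The paper attacks the double sum head-on: it replaces both sums by integrals, substitutes $u = b\chi+1$, splits at two crossover points $T$ and $S$, and then grinds through several pages of explicit antidifferentiation, invoking three auxiliary lemmas (Lemmas \ref{mfeqljk_u}, \ref{keysmash_b}, and \ref{Snu5int}) to dispose of the awkward $S$-terms; the $a\ln(n^2a^{-3})$ emerges only at the very end after a chain of logarithm manipulations.  You instead exploit the combinatorial structure that the paper's integral calculus obscures: each inner term is (up to a $\{0,1\}$ slip) the number of $\ell$ with $\ell\le\sqrt{n/t}$ whose value $\floordiv{n}{\ell t}$ lands in the window $[b(j-1)+1,\,bj]$, and these windows tile $[b+1,\,bJ]$, so the batch sum collapses to a count, leaving a one-variable sum $T$.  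Splitting $T$ at the single crossover $t = n/b^2$ then makes the two contributions transparent---$\Theta(a)$ from small $t$ and $\Theta(a\ln(n^2a^{-3}))$ from large $t$---with no further machinery.  This telescoping observation is the real content, and it buys you a proof that is both shorter and more robust: all the bookkeeping (floors, off-by-one from $b+1$ vs.\ $b$ and $bJ+1$ vs.\ $\sqrt n$, the leading ``$1+$'') is manifestly $O(b\ln b)+O(\sqrt n) = o(a)$, sidestepping the paper's delicate cancellations.  Your closing remark about the $t\le n/b^2$ range is exactly the right caution, and your use of the parameter hypothesis to keep $\ln(n^2a^{-3})$ bounded away from zero mirrors the paper's identical (implicit) reliance at the end of its computation.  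The one place to tighten would be to spell out the $o(a\ln(n^2a^{-3}))$ error bounds a little more quantitatively---$O(b\ln b)$ for the floor slippage, $O(\sqrt n)$ for the ``$1+$'' overhead---but the estimates you assert are all correct.
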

\begin{proof}
Reindexing the outer sum yields
\eqn{\sum_{\chi=2}^{\sqrt{n}/b} \sum_{t=1}^b \left( 1 + \max\left( \min \left( \isqrt{n/t} , \floordiv{n}{t \cdot (1 + b\chi - b)} \right)
- 1 - \floordiv{n}{t \cdot (b\chi + 1)}
, 0 \right) \right)}
\eqn{= \sqrt{n} - 1 + \sum_{\chi=2}^{\sqrt{n}/b} \sum_{t=1}^b \max\left( \min \left( \isqrt{n/t} , \floordiv{n}{t \cdot (1 + b\chi - b)} \right) - 1 - \floordiv{n}{t \cdot (b\chi + 1)} , 0 \right).}
The sum will turn out to be $\softTheta(a)$, so the $\sqrt{n} - 1$ is neglectably small.
\eqn{= \Theta\left( \int_2^{\sqrt{n}/b} \int_1^b \max\left( \min \left( \sqrt{n/t} , \frac{n}{t \cdot (1 + b\chi - b)} \right) - 1 - \frac{n}{t \cdot (b\chi + 1)} , 0 \right) dt \; d\chi \right)}
Let $u=b\chi+1$.
\eqn{= \Theta\left( \frac{1}{b} \cdot \int_{1+2b}^{1+\sqrt{n}} \int_1^b \max\left( \min \left( \sqrt{n/t} , \frac{n}{t \cdot (u - b)} \right) - 1 - \frac{n}{tu} , 0 \right) dt \; du \right)}
Let $T \defeq n \cdot (u-b)^{-2}$.  This is the crossover point in the $\min$ function.  For lesser $t$, the first argument prevails, and the integral becomes
\eqn{= \Theta\left( \frac{1}{b} \cdot \int_{1+2b}^{1+\sqrt{n}} \left( \int_T^b \max\left( \frac{bn}{tu \cdot (u - b)} - 1 , 0 \right) dt + \int_1^T \max\left( \sqrt{n/t} - 1 - \frac{n}{tu} , 0 \right) dt \right) du \right).}
By Lemma \ref{mfeqljk_u}, in the first inner integral, $\max$ function's first argument prevails throughout the interval of integration.
\eqn{= \Theta\left( \frac{1}{b} \cdot \int_{1+2b}^{1+\sqrt{n}} \left( \int_T^b \left( \frac{bn}{tu \cdot (u - b)} - 1 \right) dt + \int_1^T \max\left( \sqrt{n/t} - 1 - \frac{n}{tu} , 0 \right) dt \right) du \right)}
Let $\displaystyle S \defeq \frac{n}{2} - \frac{n}{u} - \sqrt{\left(\frac{n}{2}\right)^2 - \frac{n^2}{u}}$.  This is the crossover point in the remaining $\max$ function; the first argument prevails for $t > S$.
\eqn{= \Theta\left( \frac{1}{b} \cdot \int_{1+2b}^{1+\sqrt{n}} \left( \int_T^b \left( \frac{bn}{tu \cdot (u - b)} - 1 \right) dt + \int_S^T \left( \sqrt{n/t} - 1 - \frac{n}{tu} \right) dt \right) du \right)}
\eqn{= \Theta\left( \frac{1}{b} \cdot \int_{1+2b}^{1+\sqrt{n}} \left( \frac{bn \ln(b/T)}{u \cdot (u - b)} - (b - T) + 2\sqrt{n}(T^{1/2} - S^{1/2}) - (T - S) - \frac{n \ln(T/S)}{u} \right) du \right)}
Cancelling and substituting out the $T$s, and factoring out an $n$, yields
\eqn{= \Theta\left( \frac{n}{b} \cdot \int_{1+2b}^{1+\sqrt{n}} \left( \frac{b \ln\left(\dfrac{b}{n}(u-b)^2\right)}{(u - b) \cdot u} - \frac{b}{n} + \frac{2}{u-b} - 2\sqrt{\frac{S}{n}} + \frac{S}{n} + \frac{\ln\left(\dfrac{S}{n}(u-b)^2\right)}{u} \right) du \right).}
We now pull the $b/n$ term out, and also substitute $a=n/b$.
\eqn{= \Theta\left( 2b - \sqrt{n} + a \cdot \int_{1+2b}^{1+\sqrt{n}} \left( \frac{b \ln\left(\dfrac{(u-b)^2}{a}\right)}{(u - b) \cdot u} + \frac{2}{u-b} - 2\sqrt{\frac{S}{n}} + \frac{S}{n} + \frac{\ln\left(\dfrac{S}{n}(u-b)^2\right)}{u} \right) du \right)}
The expression will turn out to be $\softTheta(a)$, so the $2b-\sqrt{n}$ is neglectably small.
\eqn{= a \cdot \Theta\left( \int_{1+2b}^{1+\sqrt{n}} \left( \frac{b \ln\left(\dfrac{(u-b)^2}{a}\right)}{(u - b) \cdot u} + \frac{2}{u-b} - 2\sqrt{\frac{S}{n}} + \frac{S}{n} + \frac{\ln\left(\dfrac{S}{n}(u-b)^2\right)}{u} \right) du \right)}
Observe that $\displaystyle \sqrt{\frac{S}{n}} = \frac{S}{n} + \frac{1}{u}$.
\eqn{= a \cdot \Theta\left( \int_{1+2b}^{1+\sqrt{n}} \left( \frac{b \ln\left(\dfrac{(u-b)^2}{a}\right)}{(u - b) \cdot u} + \frac{2}{u-b} - \frac{2}{u} - \frac{S}{n} + \frac{\ln\left(\dfrac{S}{n}(u-b)^2\right)}{u} \right) du \right)}
\eqn{= a \cdot \Theta\left( \int_{1+2b}^{1+\sqrt{n}} \left( \frac{b \ln\left(\dfrac{(u-b)^2}{a}\right)}{(u - b) \cdot u} + \frac{2b}{u \cdot (u-b)} - \frac{S}{n} + \frac{\ln\left(\dfrac{(u-b)^2}{u^2}\right)}{u} + \frac{\ln\left(\dfrac{Su^2}{n}\right)}{u} \right) du \right)}
We now invoke Lemmas \ref{keysmash_b} and \ref{Snu5int} to approximately integrate the $S$-terms, yielding
\eqn{= a \cdot \Theta\left( \int_{1+2b}^{1+\sqrt{n}} \left( \frac{b \ln\left(\dfrac{(u-b)^2}{a}\right)}{(u - b) \cdot u} + \frac{2b}{u \cdot (u-b)} + \frac{\ln\left(\dfrac{(u-b)^2}{u^2}\right)}{u} \right) du + O(1/b) \right).}
The integral will turn out to be $\softTheta(1)$, so the $O(1/b)$ is neglectably small.
\eqn{= a \cdot \Theta\left( \int_{1+2b}^{1+\sqrt{n}} \left( \frac{b \ln\left(\dfrac{(u-b)^2}{a}\right)}{(u - b) \cdot u} + \frac{2b}{u \cdot (u-b)} + \frac{\ln\left(\dfrac{(u-b)^2}{u^2}\right)}{u} \right) du \right)}
Now we put the integrand on a common denominator and use the logarithm's quotient rule to obtain
\eqn{= a \cdot \Theta\left( \int_{1+2b}^{1+\sqrt{n}} \left( \frac{2b \ln(u-b) - b \ln(a) + 2b + 2(u-b) \ln(u-b) - 2(u-b) \ln(u)}{(u - b) \cdot u} \right) du \right).}
Some terms in the numerator cancel.
\eqn{= a \cdot \Theta\left( \int_{1+2b}^{1+\sqrt{n}} \left( \frac{ - b \ln(a) + 2b + 2u \ln(u-b) - 2(u-b) \ln(u)}{(u - b) \cdot u} \right) du \right)}
We now undo the common denominator to find that the integral has become elementary.
\eqn{= a \cdot \Theta\left( \int_{1+2b}^{1+\sqrt{n}} \left( (2-\ln(a))\left(\frac{1}{u-b}-\frac{1}{u}\right) + \frac{2\ln(u-b)}{u - b} - \frac{2\ln(u)}{u} \right) du \right)}
\eqn{= a \cdot \Theta\left( \eval{\left( (2-\ln(a))\left(\ln(\abs{u-b}) - \ln(\abs{u})\right) + (\ln(u-b))^2 - (\ln(u))^2 \right)}{u=1+2b}{1+\sqrt{n}}\right)}
\eqn{= a \cdot \Theta\left( \eval{\left( \ln(e^2 a^{-1})\ln\left(1-\frac{b}{u}\right) + \ln(u^2-bu)\ln\left(1-\frac{b}{u}\right) \right)}{u=1+2b}{1+\sqrt{n}}\right)}
\eqn{= a \cdot \Theta\left( \eval{\left( \ln\left(e^2 a^{-1}(u^2-bu)\right) \cdot \ln\left(1-\frac{b}{u}\right) \right)}{u=1+2b}{1+\sqrt{n}}\right)}
\eqn{= a \cdot \Theta\left( \ln\left(\frac{e^2}{a}(1+\sqrt{n})(1+\sqrt{n}-b)\right) \ln\left(1-\frac{b}{1+\sqrt{n}}\right) - \ln\left(\frac{e^2}{a}(2b^2+3b+1)\right) \ln\left(\frac{1+b}{1+2b}\right) \right)}
\eqn{= a \cdot \Theta\left( \ln\left(e^2 a^{-1} \cdot \Theta(n)\right) \cdot \ln\left(1-\Theta\left(\frac{b}{\sqrt{n}}\right)\right) + \ln\left(e^2 a^{-1} \cdot \Theta(b^2)\right) \cdot \Theta(\ln(2)) \right)}
Since we are working with $a = \softTheta(n^{2/3})$ and $b = \softTheta(n^{1/3})$, this becomes
\eqn{= a \cdot \Theta\left( \ln\left(e^2 \cdot \softTheta(n^{-2/3}) \cdot \Theta(n)\right) \cdot \ln\left(1-\softTheta(n^{-1/6})\right) + \ln\left(a^{-1} \cdot \Theta(b^2)\right) \cdot \Theta(1) \right)}
\eqn{= a \cdot \Theta\left( \ln\left(e^2 \cdot \softTheta(n^{1/3})\right) \cdot (-\softTheta(n^{-1/6})) + \ln\left(a^{-1} \cdot \Theta(b^2)\right) \cdot \Theta(1) \right)}
\eqn{= a \cdot \Theta\left( o(1) + \ln\left(\Theta(1) \cdot b^2/a\right) \cdot \Theta(1) \right)}
\eqn{= a \cdot \Theta\left( o(1) + \ln(\Theta(1)) \cdot \Theta(1) + \ln(b^2/a) \cdot \Theta(1) \right).}
Since we are working with $a = (n/\ln(\ln(n)))^{2/3}$ and $b = n^{1/3} \cdot (\ln(\ln(n)))^{2/3}$, $\ln(b^2/a)$ increases without bound.  Therefore,
\eqn{= a \cdot \Theta\left(\ln(b^2/a) \cdot \Theta(1)\right)}
\eqn{= \Theta\left(a \cdot \ln\left(n^2 a^{-3}\right)\right),}
as desired.
\end{proof}

\begin{lemma} \label{mfeqljk_u}
For $1 + b \leq u \leq 1 + \sqrt{n}$ and large $n$, $\dfrac{bn}{(u-b) \cdot u} > b$.
\end{lemma}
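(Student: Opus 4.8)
The plan is to clear the common factor $b > 0$ first: since $b \geq 1$ and $(u-b)\cdot u > 0$ on the stated interval (because $u - b \geq 1$ and $u > 0$), the claimed inequality $\frac{bn}{(u-b)\cdot u} > b$ is equivalent to $n > (u-b)\cdot u = u^2 - bu$. So it suffices to bound $u^2 - bu$ from above over $1 + b \leq u \leq 1 + \sqrt{n}$ and show the bound is strictly less than $n$.

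First I would note that the function $u \mapsto u^2 - bu$ has derivative $2u - b$, which is positive for $u > b/2$; since $1 + b > b/2$, the function is increasing throughout the interval $[1+b,\ 1+\sqrt{n}]$, so its maximum is attained at the right endpoint. Thus it is enough to check $n > (1+\sqrt{n})^2 - b(1+\sqrt{n})$. Rearranging, this says $b(1+\sqrt{n}) > (1+\sqrt{n})^2 - n = 1 + 2\sqrt{n}$. Recalling from the development of the algorithm that $b = \softTheta(n^{1/3})$, so that $b \to \infty$ with $n$, for all sufficiently large $n$ we have $b \geq 3$, and then $b(1+\sqrt{n}) \geq 3 + 3\sqrt{n} > 1 + 2\sqrt{n}$. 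This establishes the endpoint inequality, hence $u^2 - bu < n$ for every $u$ in the interval, and multiplying back by $b$ gives the lemma.

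There is no real obstacle here; the only things requiring a little care are the monotonicity step — confirming that the maximum of $u^2 - bu$ on the interval sits at the right endpoint and not at an interior critical point — and tracking the direction of the inequality when dividing by $b$ and when multiplying by $(u-b)\cdot u$. The fact that $b = \softTheta(n^{1/3})$ dwarfs $\sqrt{n}$ only in the comparison of lower-order terms (here it is merely $b(1+\sqrt n)$ versus $1 + 2\sqrt n$, so even $b \geq 3$ suffices) is what makes the final estimate immediate.
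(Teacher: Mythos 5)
Your proof is correct. Both you and the paper reduce, after clearing the positive factors $b$ and $(u-b)\cdot u$, to the inequality $n > u^2 - bu$; the difference lies in how that inequality is established. The paper argues pointwise: from $u \leq 1 + \sqrt{n}$ it gets $n > (u-1)^2$, and then subtracts the nonnegative quantity $(b-2)u + 1$ (using $b > 2$) to land directly on $n > u^2 - bu$ with no calculus involved. You instead observe that $u \mapsto u^2 - bu$ is increasing on $[1+b,\,1+\sqrt{n}]$ (since $2u - b > 0$ there) and therefore only check the right endpoint $u = 1 + \sqrt{n}$, which reduces to $b(1+\sqrt{n}) > 1 + 2\sqrt{n}$ and holds once $b \geq 3$. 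Both routes use the same two ingredients — the upper bound $u \leq 1 + \sqrt{n}$ and the eventual lower bound $b \geq 3$ — but your monotonicity step trades the paper's one-line algebraic subtraction for a slightly longer but arguably more transparent argument that localizes the verification to a single endpoint. Neither has a substantive advantage here; this is purely a stylistic choice.
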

\begin{proof}
Since $\sqrt{n} > u-1$, we have
\eqn{n > (u-1)^2.}
For all but the smallest $n$, we will have $b > 2$; since we are working in the limit of large $n$, we can weaken this to
\eqn{n > (u-1)^2 - (bu - 2u + 1) = u^2 - bu,}
from which the conclusion follows immediately.
\end{proof}

\begin{lemma} \label{keysmash_b}
$\displaystyle \int_{1+2b}^{1+\sqrt{n}} \left( \frac{1}{u} \cdot \ln \left( u^2 \cdot \frac{S}{n} \right) \right) du = O(b^{-1})$.
\end{lemma}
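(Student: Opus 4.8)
The plan is to bound the integrand by $O(u^{-2})$ and then integrate directly; everything hinges on the fact that $u^{2}S/n$ differs from $1$ by only $O(1/u)$, so its logarithm is of size $O(1/u)$.

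First I would pin down $\sqrt{S/n}$ in closed form. From $S = \tfrac{n}{2} - \tfrac{n}{u} - \sqrt{(n/2)^{2} - n^{2}/u}$ we get $S/n = \tfrac12 - \tfrac1u - \sqrt{\tfrac14 - \tfrac1u}$, and squaring confirms the identity already observed in the proof of Lemma~\ref{BigIntegral},
\[
\sqrt{\frac{S}{n}} \;=\; \frac{S}{n} + \frac1u \;=\; \frac12 - \sqrt{\frac14 - \frac1u} \;=\; \frac{1 - \sqrt{1 - 4/u}}{2}.
\]
These expressions are all valid exactly when $u \ge 4$, which holds on the entire range $1+2b \le u \le 1+\sqrt{n}$ once $n$ — and hence $b = \softTheta(n^{1/3})$ — is large enough.

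Next I would use the elementary two-sided bound $1 - \tfrac{x}{2} - \tfrac{x^{2}}{2} \le \sqrt{1-x} \le 1 - \tfrac{x}{2}$ for $0 \le x \le 1$ (the upper inequality is standard; the lower one follows by squaring) with $x = 4/u$. Substituting into the displayed formula gives $\tfrac1u \le \sqrt{S/n} \le \tfrac1u + \tfrac{4}{u^{2}}$, hence $1 \le u^{2}(S/n) \le (1 + 4/u)^{2}$, and therefore $0 \le \ln\!\bigl(u^{2}S/n\bigr) \le 2\ln(1+4/u) \le 8/u$ by $\ln(1+t) \le t$. Thus the integrand lies between $0$ and $8u^{-2}$ throughout, so the integral is at most $\int_{1+2b}^{1+\sqrt{n}} 8u^{-2}\,du < 8/(1+2b) = O(b^{-1})$, and it is nonnegative, which gives the claim.

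I do not expect a genuine obstacle. The only points requiring care are that the estimate of $\sqrt{S/n}$ must be carried to second order — a crude bound like $\sqrt{S/n} \le 2/u$ would leave $\ln(u^{2}S/n)$ merely $O(1)$ and produce an integral of size $O(\ln(\sqrt{n}/b))$, which is too weak — and that $u \ge 4$ must be checked on the whole interval, which is automatic for large $n$ since $b \to \infty$.
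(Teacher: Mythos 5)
Your proof is correct and reaches the same endgame as the paper's — bound the integrand by a constant multiple of $u^{-2}$ and integrate — but arrives at the key estimate $\ln(u^2 S/n)=O(1/u)$ by a different and arguably cleaner route. The paper establishes nonnegativity of $\ln(u^2 S/n)$ by invoking the Catalan-number series $S/n=\sum_{k\ge 2}C_{k-1}u^{-k}>u^{-2}$, and establishes the matching upper bound by citing Lemma~\ref{sqrtapprox2}, whose proof is a triple application of L'H\^{o}pital's rule and which in principle only gives a limiting statement that then has to be upgraded to a uniform $O(\cdot)$ bound over the integration range. You instead observe that $\sqrt{S/n}=\frac{1}{2}\bigl(1-\sqrt{1-4/u}\bigr)$ exactly and sandwich $\sqrt{1-x}$ between the polynomials $1-\tfrac{x}{2}-\tfrac{x^2}{2}$ and $1-\tfrac{x}{2}$ on $[0,1]$, giving the fully explicit two-sided inequality $1\le u^2 S/n\le(1+4/u)^2$ for all $u\ge 4$. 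That buys you concrete constants (the integrand is pinned between $0$ and $8u^{-2}$ pointwise) without any appeal to L'H\^{o}pital or generating functions, and it makes Lemma~\ref{sqrtapprox2} unnecessary for this lemma. You are also right that stopping at a first-order bound like $\sqrt{S/n}\le 2/u$ would be fatal — it would only give $\ln(u^2 S/n)=O(1)$ and hence an integral of size $\Theta(\log(\sqrt{n}/b))$, not $O(b^{-1})$ — so the second-order term in the lower bound on $\sqrt{1-x}$ is genuinely load-bearing. No gap.
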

\begin{proof}
Let $C_k$ be the $k$\textsuperscript{th} Catalan number (\seqnum{A000108}).  Observe that
\eqn{\frac{S}{n} = \sum_{k=2}^\infty \frac{C_{k-1}}{u^k},}
and therefore $S/n > u^{-2}$.  Then
\eqn{0 < \int_{1+2b}^{1+\sqrt{n}} \left(\frac{1}{u} \cdot \ln\left( u^2 \cdot \frac{S}{n} \right) \right) du.}
By Lemma \ref{sqrtapprox2}, we have $u^2 \cdot S/n = 1 + O(u^{-1})$, so
\eqn{ = \int_{1+2b}^{1+\sqrt{n}} \left(\frac{1}{u} \cdot \ln\left( 1 + O(u^{-1}) \right) \right) du}
\eqn{ = \int_{1+2b}^{1+\sqrt{n}} \left(\frac{1}{u} \cdot O(u^{-1}) \right) du}
\eqn{ = \int_{1+2b}^{1+\sqrt{n}} O(u^{-2}) \; du}
\eqn{ = \eval{-O(u^{-1})}{u=1+2b}{1+\sqrt{n}}}
\eqn{ = O\left(\frac{1}{1+2b}\right) - O\left(\frac{1}{1+\sqrt{n}}\right)}
\eqn{ = O(b^{-1}),}
as desired.
\end{proof}

\begin{lemma} \label{Snu5int}
$\disp \int_{1+2b}^{1+\sqrt{n}} \frac{S}{n} \; du = O(b^{-1})$.
\end{lemma}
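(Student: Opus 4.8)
The plan is to mirror the argument used for Lemma \ref{keysmash_b}, minus its logarithmic factor. Unwinding the definition of $S$, one has $S/n = \tfrac{1}{2} - \tfrac{1}{u} - \sqrt{\tfrac{1}{4} - \tfrac{1}{u}}$, and the proof of Lemma \ref{keysmash_b} already records the Catalan-number expansion $S/n = \sum_{k=2}^\infty C_{k-1}\, u^{-k}$, valid for $u > 4$. From this I would extract the only two facts needed: first, $S/n > 0$ on the whole interval of integration, so the integral in question is nonnegative; second, $S/n = u^{-2} + O(u^{-3})$ (equivalently, by Lemma \ref{sqrtapprox2}, $u^2 \cdot S/n = 1 + O(u^{-1})$), so $S/n = O(u^{-2})$ with an absolute implied constant once $u$ is bounded away from the branch point $u = 4$.

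Next I would observe that the lower endpoint $1 + 2b$ grows without bound, since $b = \softTheta(n^{1/3}) \to \infty$; hence every $u$ in $[1+2b,\, 1+\sqrt n]$ satisfies $u \geq 1 + 2b \gg 4$, and the bound $S/n = O(u^{-2})$ holds uniformly on the interval. It then remains only to integrate: the integral is nonnegative and at most $\int_{1+2b}^{1+\sqrt n} O(u^{-2})\, du = O\!\left(\tfrac{1}{1+2b}\right) - O\!\left(\tfrac{1}{1+\sqrt n}\right) = O(b^{-1})$, which is the claim.

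I do not expect any genuine obstacle here; this is essentially the same one-step estimate that closed Lemma \ref{keysmash_b}, and the only point needing a line of care is the uniformity of the $O(u^{-2})$ bound, which is immediate from convergence of the Catalan series for $u > 4$ together with $1 + 2b \to \infty$.
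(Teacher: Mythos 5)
Your proof is correct, and it arrives at exactly the bound the paper needs ($S/n$ positive and $O(u^{-2})$ on the interval), but it gets there by a different route. You appeal to the Catalan-series expansion $S/n = \sum_{k \geq 2} C_{k-1} u^{-k}$, which the paper already established inside the proof of Lemma~\ref{keysmash_b}, and then read off positivity and the leading-order $u^{-2}$ decay from that series (with a remark about uniformity once $u$ is bounded away from the branch point). The paper instead gives a self-contained, purely algebraic chain of inequalities: starting from $16/u^2 + 16/u < 4$ for $u > 5$, it manipulates to $u - 2 - \sqrt{u^2 - 4u} < 4/u$, rescales to $0 < S/n < 2/u^2$, and integrates. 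The paper's version produces an explicit constant and has no dependence on series convergence or on the earlier lemma; yours is shorter and more economical because it reuses machinery already in place, at the modest cost of having to note that $1 + 2b \to \infty$ keeps you in the region where the expansion is valid uniformly. Both are sound, and the final integration step is identical.
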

\begin{proof}
For $u > 5$, we have
\eqn{\frac{16}{u^2} + \frac{16}{u} < 4}
\eqn{u^2 + 4 + \frac{16}{u^2} - 4u - 8 + \frac{16}{u} < u^2 - 4u}
\eqn{u - 2 - \frac{4}{u} < \sqrt{u^2 - 4u}}
\eqn{u - 2 - \sqrt{u^2 - 4u} < \frac{4}{u}}
\eqn{0 < u - 2 - \sqrt{u^2 - 4u} < \frac{4}{u}}
\eqn{0 < \frac{1}{2} - \frac{1}{u} - \sqrt{\frac{1}{4} - \frac{1}{u}} < \frac{2}{u^2}}
\eqn{0 < \frac{S}{n} < \frac{2}{u^2}.}
Integrating then gives the desired result.
\end{proof}


\setlength{\bibitemsep}{\parskip}
\printbibliography[heading=bibnumbered]

\bigskip
\hrule
\bigskip

\noindent 2020 \emph{Mathematics Subject Classification}: Primary 11Y16; Secondary 11-04, 11A25, 11Y55, 11Y70.

\noindent \emph{Keywords}: totient, totient summatory function, Dirichlet hyperbola method, computation.


\bigskip
\hrule
\bigskip

\noindent (Concerned with sequences \seqnum{A002088} and \seqnum{A064018}.)

\end{document}